\definecolor{sealbrown}{rgb}{0.2, 0.08, 0.08}
\definecolor{OwlGreen}{RGB}{90,168,0}
\definecolor{OwlRed}{RGB}{255,92,168}
\newtheorem{theorem}{Theorem}[section]
\newtheorem{corollary}[theorem]{Corollary}
\newtheorem{lemma}[theorem]{Lemma}
\newtheorem{proposition}[theorem]{Proposition}
\theoremstyle{definition}
\newtheorem{definition}[theorem]{Definition}
\newtheorem{remark}[theorem]{Remark}
\numberwithin{equation}{section}
\newcommand{\co}{\mathrm{c}_0}
\newcommand{\vertiii}[1]{{\left\vert\kern-0.35ex\left\vert\kern-0.35ex\left\vert #1 \right\vert\kern-0.35ex\right\vert\kern-0.35ex\right\vert}}
\title[Retraction methods and fixed point property]{Retraction methods and fixed point free maps with null minimal displacements on unit balls}
\author[C. S. Barroso]{Cleon S. Barroso}
\address[C. S. Barroso]{Department of Mathematics, Federal University of Cear\'a, 60455-360, Campus do Pici, Av. Humberto Monte S/N, Fortaleza, Brazil}
\email{{\tt cleonbar@mat.ufc.br}}
\author[V. Ferreira]{Valdir Ferreira}
\address[V. Ferreira]{Centro de Ci\^encias e Tecnologia, Universidade Federal do Cariri, Cidade Universit\'aria s/n, 63048-0808, Juazeiro do Norte, CE, Brazil}
\email{\tt valdir.ferreira@ufca.edu.br}
\keywords{Banach spaces, Schauder basis, unconditional and spreading bases, fixed point free property, H\"older Lispchitz maps, minimal displacements}
\subjclass[2010]{47H10, 46B03, 46B20}
\begin{document}

\begin{abstract}
In this paper we consider the class of Lipschitz maps on the unit ball $B_X$ of a Banach space $X$, and the question we deal with is whether for any $\lambda>1$ there exists a $\lambda$-Lipschitz fixed-point free mapping $T\colon B_X\to B_X$ with $\mathrm{d}(T,B_X)=0$. We also consider its H\"older version. New related results are obtained. We show that if $X$ has a spreading Schauder basis then such mappings can always be built, answering a question posed by the first author in \cite{Bar}. In the general case, using a recent approach of R. Medina \cite{M} concerning H\"older retractions of $(r_n)$-flat closed convex sets, we show that for any decreasing null sequence $(r_n)\subset \mathbb{R}$ and $\alpha\in (0,1)$, there exists a fixed-point free mapping $T$ on $B_X$ so that $\|T^nx - T^n y\|\leq r_n(\| x - y\|^\alpha +1)$ for all $x, y\in B_X$ and $n\in\mathbb{N}$. 
\end{abstract}

\maketitle


\section{Introduction}
Let $X$ be a real Banach space and $B_X$ denote its closed unit ball. For a convex subset $C$ of $X$, denote by $\mathcal{B}(C)$ the family of all bounded, closed convex subsets  of $C$. In \cite{LS} P. K. Lin and Y. Sternfeld proved that for any noncompact (in norm) set $K\in \mathcal{B}(X)$ there exists a Lipschitz map $T\colon K\to K$ with positive {\it minimal displacement}. That is, $\mathrm{d}(T, K)=\inf_{x\in K}\| x - T(x)\|>0$. This shows in particular that $\digamma(T)=\emptyset$, where $\digamma(T)$ denotes the fixed-point set of $T$. For any such $K$, two questions that naturally arise from the context in \cite{LS} are:
\vskip .15cm
\hskip .15cm {\bf ($\mathcal{Q}1$)} {\it Is there a Lipschitz map $T\colon K\to K$ with $\mathrm{d}(T, K)=0$ and $\digamma(T)=\emptyset$?}
\vskip .2cm
\hskip .15cm {\bf ($\mathcal{Q}2$)} {\it Is there a {\it uniformly} Lipschitz map $T\colon K\to K$ with $\digamma(T)=\emptyset$?}

\smallskip  

\noindent It is worth highlighting that these issues have been studied directly or indirectly in several works, which in a way illustrate how challenging they are. In retrospect, the class of maps satisfying $\mathrm{d}(T, K)=0$ includes three important subclasses, each of which playing a distinct role in metric fixed point theory. Namely, affine maps, nonexpansive (i.e. $1$-Lipschitz) mappings and those that are asymptotically regular (i.e. $\| T^nx - T^{n+1} x\| \to 0$, $x\in K$). The literature exploring different aspects of the {\it fixed point property} (FPP) and the behavior of $\mathrm{d}(T, K)$ is quite vast, cf. e.g. \cite{BF, BenJap, GoKi, GMMV, Ki3, Pia} and references therein. In \cite{BenJap} it was noted that the Lipschitz constant in Lin-Sternfeld's result can be made to be as close to $1$ as desired. The argument uses convex combinations of the form $(1- \lambda)T + \lambda I$ with $\lambda\approx 1$. As far as we known, however, questions ($\mathcal{Q}1$-$\mathcal{Q}2$) remain still open. It is not clear, e.g., whether Lin-Sternfeld's approach can be refined to solve them. Moreover, despite the growing interest in these questions, there seem to be no optimal answers even when $K=B_X$. In contrast, it is known, for example, that $B_{\ell_\infty}$ and $B_{\ell_1}$ both have the FPP for nonexpansive maps, whereas $B_{\co}$ does not \cite{GoKi}. Also, Lin \cite{Lin2} displayed an $\ell_1$ renorming $\|\cdot\|_\gamma$ ensuring in particular that $B_{(\ell_1,\| \|_\gamma)}$ has the FPP for nonexpansive maps. 

\smallskip 
  
\noindent In this paper we establish some new contributions to these problems and explore some of their consequences. Here we will focus on the existence of a set $K$ and a mapping $T$ that fulfill the properties prescribed in ($\mathcal{Q}1$) or ($\mathcal{Q}2$). In this regard, the approach for $K=B_X$ includes using retraction methods and the consequences concern the H\"older version of Lin-Sternfeld's result, cf. \cite{Bar}: If $X$ is infinite dimensional, $\alpha\in (0,1)$ and $\lambda>0$, then there is a mapping $T\colon B_X\to B_X$ with $\digamma(T)=\emptyset$, $\mathrm{d}(T, B_X)>0$ and $\|T(x) - T(y)\|\leq \lambda \|x - y\|^\alpha$ for all $x, y\in B_X$. Recall \cite{Bar} that, for $\alpha\in (0,1]$ and $\lambda>0$, a mapping $T\colon K\to K$ is called:
\begin{itemize}\setlength\itemsep{1mm}
\item $\alpha$-H\"older nonexpansive if $\|T(x) - T(y)\|\leq \|x - y\|^\alpha$ for all $x, y\in K$.
\item $\alpha$-H\"older $\lambda$-Lipschitz  if  $\|T(x) - T(y)\|\leq \lambda \|x - y\|^\alpha$ for all $x, y\in K$.
\item Uniformly $\alpha$-H\"older $\lambda$-Lipschitz if $\| T^n(x) - T^n(y)\|\leq \lambda \|x - y\|^\alpha$ for all $x, y\in K$ and $n\in\mathbb{N}$.
\end{itemize} 

\smallskip 

\noindent Fixed point issues for a larger class of H\"older maps were first addressed by Kirk \cite{Ki3}. His results only yield growth estimates for $\mathrm{d}(T,K)$ (cf. \cite[Proposition 2.2, Theorems 2.3, 2.4, 4.1, 4.4 and 4.5]{Ki3}). However, as the class of H\"older maps considered here is slightly smaller, it is expected that ($\mathcal{Q}1$--$\mathcal{Q}2$) can also be solved in their H\"older's versions. Specifically, we shall try to use H\"older retractions to provide answers for the following issues: under what conditions does there exist for any $\alpha\in (0,1$) and $\lambda>0$, a fixed-point free mapping $T\colon B_X\to B_X$ with $\mathrm{d}(T,B_X)=0$ and with the property of being: 

\smallskip 

\hskip .2cm {\bf ($\mathcal{Q}3$)} {\it $\alpha$-H\"older $\lambda$-Lipschitz?}

\vskip .2cm 

\hskip .2cm {\bf ($\mathcal{Q}4$)} {\it uniformly $\alpha$-H\"older $\lambda$-Lipschitz?}

\smallskip 

\noindent Mostly influenced by Kirk's work, several partial answers to these questions have been provided in \cite{Bar}. Here we also obtain new related results. 

\smallskip 

\subsection{Organization} The plan of this paper is the following. In Section \ref{sec:2} we set up notation and some basic terminologies. Our results are delivered in Sections \ref{sec:3} and \ref{sec:4}. The paper ends in Section \ref{sec:5} with some concluding remarks.

\smallskip 
 
\subsection*{Acknowledgments} A significant part of this work was presented at the Brazilian Workshop in Banach Spaces, Butant\~a Edition, USP - S\~ao Paulo, December 05-10, 2022. The first author is grateful to C. Brech and V. Ferenczi by the opportunity and support. The authors wish to thank R. Medina for helpful comments. The first version of this work was done when the second author was visiting the Department of Mathematics of the UFC, during July 10--15, 2023. He thanks Professor Ernani Ribeiro for the support and attention. The authors are grateful to the anonymous referee for the careful reading and helpful comments that helped us not only to improve the presentation, but also to correct some inaccuracies in the previous version of this manuscript.
 
\smallskip 

\section{Preliminaries}\label{sec:2}

The terminology used here follows \cite{AK, FHHMZ, LTI, M}. Throughout this paper all Banach spaces are infinite dimensional and real. For $r>0$, $B_X(r)$ denotes the closed ball in $X$ with center $0$ and radius $r$. $\ell_\infty$, $\mathrm{c}$ and $\mathrm{c}_0$ denote the classical sequence spaces of all bounded, convergent and null sequences, respectively. For $1\leq p<\infty$, $\ell_p$ denotes the space of all $p$-absolutely convergent series. By $\mathrm{c}_{00}$ we denote the space of eventually null sequences. A basic sequence $(x_n)_{n=1}^\infty$ in $X$ is called {\it semi-normalized} if there exist $A, B>0$ such that $A\leq \| x_n\|\leq B$ for all $n$. Here $\llbracket x_n\rrbracket$ denotes the closed linear span of $(x_n)_{n=1}^\infty$. If $(x_n)_{n=1}^\infty$ and $(y_n)_{n=1}^\infty$ are basic sequences in Banach spaces $X$ and $Y$, respectively, we say that $(x_n)_{n=1}^\infty$ $D$-dominates $(y_n)_{n=1}^\infty$ for $D\geq 1$, denoted by $(y_n)_{n=1}^\infty \lesssim_D (x_n)_{n=1}^\infty$, if the linear mapping $\mathcal{L}\colon \llbracket x_n\rrbracket\to \llbracket y_n \rrbracket$ given by $\mathcal{L}(x_n)= y_n$ for all $n$, is bounded with $\|\mathcal{L}\|\leq D$. $(x_n)_{n=1}^\infty$ is said to be $(A,B)$-{\it equivalent} to $(y_n)_{n=1}^\infty$, for $A, B>0$, if $(y_n)_{n=1}^\infty \lesssim_{A} (x_n)_{n=1}^\infty\lesssim_B (y_n)_{n=1}^\infty$. The {\it fundamental function} of a basic sequence $(x_n)_{n=1}^\infty$ is the function $\Phi\colon \mathbb{N}\to \mathbb{R}^+$ given by $\Phi(n)= \|\sum_{i=1}^n x_i \|$. A basis is called {\it subsymmetric} (resp. $1$-subsymmetric) if it is unconditional and equivalent (resp. $1$-unconditional and $1$-equivalent) to each of its subsequences. According to \cite[Definition 2.1]{Anso} the basis $(x_n)_{n=1}^\infty$ is said to be {\it lower subsymmetric} if it is unconditional and dominates all of its subsequences; and it is called {\it $C$-lower subsymmetric} if for every sequence of signs $(\epsilon_n)$ and every increasing map $\phi\colon \mathbb{N}\to \mathbb{N}$, $(\epsilon_n x_{\phi(n)})_{n=1}^\infty \lesssim_C (x_n)_{n=1}^\infty$. We will refer to a basic sequence $(x_n)_{n=1}^\infty$ as being {\it unconditional shift} (resp. {\it $1$-unconditional shift\,}) if it is unconditional (resp. $1$-unconditional) and dominates its right-shift sequence $(x_{n+1})_{n=1}^\infty$.


\medskip 

\section{On H\"older retractions of $(r_n)$-flat convex sets}\label{sec:3}

The goal of this section is to review the retraction approach \cite{M}. What really motivates us to do this is the bet on possible connections with the metric fixed point theory. Let us begin with some notation. Let $(M, d_M)$ and $(N, d_N)$ be two metric spaces and $f\colon M\to N$ be an arbitrary mapping. The modulus of continuity of $f$ is the function $\omega_f\colon [0,\infty) \to [0,\infty)$ given by
\[
\omega_f(t) = \sup\big\{ d_N\big( f(x), f(y)\big) \, \colon \, d_M(x, y)\leq t\big\}, 
\]
where the supremum is taken to be infinite whenever it does not exist. Note that 
\[
d_N(f(x), f(y))\leq \omega_f( d_M(x,y))\quad\text{for all } x, y\in M.
\]
The mapping $f$ is said to be:
\begin{itemize}\setlength\itemsep{1.5mm}
\item uniformly continuous if $\omega_f$ is continuous at $t=0$.
\item $\omega$-Lipschitz if $\omega_f(t)< \infty$ for all $t\geq 0$.
\item uniformly $\omega$-Lipschitz if $\sup_{n\in\mathbb{N}}\omega_{f^n}(t)<\infty$ for all $t\geq 0$, where $f^n$ is the $n$-\textrm{th} iterate of $f$.
\item $\omega$-H\"older Lipschitz if for some $\alpha\in (0,1)$ and some constant $C>0$ (H\"older-Lipschitz constant of $f$) one has $\omega_f(t)\leq C t^\alpha$ for all $t\geq 0$.  
\end{itemize} 
 
\noindent A retraction from a metric space $(M, d)$ onto a subset $N\subset M$ is a mapping $R\colon M\to N$ satisfying $R(x) =x$ for every $x\in N$. The image of a retraction is called a retract. Further, for $\alpha\in (0,1)$, $N$ is said to be an absolute $\alpha$-H\"older retract if it is a $\alpha$-H\"older retract of every metric space containing it. 

\vskip .1cm 

\noindent Recall \cite[p. 3]{M} that for $a, b\in \mathbb{R}^+$, a subset $N\subset M$ is called an $(a, b)$-net of $M$ if the following properties hold:
\begin{itemize}\setlength\itemsep{1.5mm}
\item $N$ is $a$-separated, that is, $d(x, y)\geq a$ for every $x\neq y\in N$.
\item $N$ is $b$-dense, that is, for every $x\in M$ there is $y\in N$ such that $d(x, y) \leq b$. 
\end{itemize}

\vskip .1cm 

\noindent Let now $(X, \|\cdot\|)$ be an infinite dimensional normed space and $E=(E_n)_{n=1}^\infty$ a sequence of $n$-dimensional subspaces of $X$. 

\begin{definition} Given a nonempty subset $K$ of $X$, the heights $(h^E_n)_{n\in\mathbb{N}}$ of $K$ relative to $E=(E_n)_{n=1}^\infty$ (\cite[Definition 2.2]{M}) are defined as
\[
h^E_n:=\sup\{ \mathrm{dist}(x, K\cap E_n) \, \colon \, x\in K\}.
\]
\end{definition}

\noindent The following notion of $(r_n)$-flatness is slightly weaker than the one given in \cite[Definition 2.2]{M}. 

\begin{definition}\label{dfn:1sec3} A nonempty set $K\subset X$ is called $(r_n)$-flat for some null sequence of positive numbers $(r_n)_{n=1}^\infty \subset \mathbb{R}^+$ if there is a sequence $E=(E_n)_{n=1}^\infty$ of $n$-dimensional subspaces of $X$ so that the following conditions hold true for every $n\in \mathbb{N}$:
\begin{itemize}\setlength\itemsep{1.5mm}
\item[(i)] $K\cap E_n$ is nonempty and compact.
\item[(ii)] $h^E_n \leq r_n$.
\end{itemize}
\end{definition}

\noindent The main result of this section (compare with \cite[Theorem 2.9]{M}) reads.

\begin{theorem}[R. Medina]\label{thm:1sec3} Let $\alpha\in (0,1)$. Then every $(20^{\frac{n}{\alpha-1}})$-flat closed convex subset $K$ of an infinite dimensional normed space $X$ is a $\alpha$-H\"older retract of $X$ with H\"older-Lipschitz constant $1520\times 20^{2-\alpha}$.
\end{theorem}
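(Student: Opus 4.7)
The plan is to adapt Whitney's extension construction to the metric setting, following Medina. Assume $K$ is isometrically embedded in an arbitrary metric space $(M,d)$; I aim to construct a retraction $R\colon M\to K$ satisfying the claimed Hölder bound. Fix the geometric scale $r_n := 20^{n/(\alpha-1)}$, which decreases to $0$ with ratio $q := 20^{1/(\alpha-1)}\in(0,1)$ since $\alpha<1$, and stratify the complement of $K$ by the shells $A_n := \{x\in M : r_{n+1} < \dist(x,K)\leq r_n\}$. The flatness hypothesis supplies $n$-dimensional subspaces $E_n$ with $K\cap E_n$ compact and every point of $K$ within $r_n$ of $K\cap E_n$.

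Next I would introduce anchors and nets. For each $n$, choose a finite $r_n$-net $F_n\subset K\cap E_n$ (possible by compactness), and an $(r_{n+1},c\,r_n)$-net $N_n$ in the shell $A_n$ for a suitable absolute constant $c$. For each $v\in N_n$, take a near-metric projection onto $K$ and then onto $F_n$; this produces an anchor $a_v\in F_n\subset K$ with $d(v,a_v)$ of order $r_n$. Then build $R$ by a locally finite partition of unity $(\varphi_v)_{v\in\bigcup_n N_n}$ of $M\setminus K$, where each $\varphi_v$ is supported in a ball of radius comparable to $r_n$ centred at $v\in N_n$ and has Lipschitz constant of order $1/r_n$. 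Set
\[
R(x) \;=\; \sum_{n}\sum_{v\in N_n} \varphi_v(x)\, a_v \qquad (x\notin K), \qquad R(x)\;=\;x \qquad (x\in K).
\]
Convexity of $K$ ensures $R(x)\in K$, and the support conditions ensure continuity up to $K$.

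The Hölder estimate then follows from a case analysis on the pair $(x,y)$. For $x,y\in K$ the bound is trivial. For $x,y$ in the same or neighbouring shells, only $O(1)$ bump functions are active at each of $x,y$; using $\sum_v\varphi_v\equiv 1$, one writes $R(x)-R(y)=\sum_v(\varphi_v(x)-\varphi_v(y))(a_v-a_{v_0})$ for a fixed base anchor $a_{v_0}$, where $\|a_v-a_{v_0}\|$ is of order $r_n$ and $\sum_v|\varphi_v(x)-\varphi_v(y)|\lesssim d(x,y)/r_n$, hence $\|R(x)-R(y)\|\lesssim d(x,y)$; since $d(x,y)\geq r_{n+1}$ in this regime, the identity $r_n=q\,r_{n-1}$ together with $r_n^{1-\alpha}=20^{-n}$ converts this into a uniform $\alpha$-Hölder bound. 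When $x\in A_n$ is far from $y\in K$, or from a shell $A_m$ with $m\gg n$, one telescopes along a chain of intermediate shells and sums a geometric series in $q^\alpha$.

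The main obstacle is tracking the absolute constant $1520\times 20^{2-\alpha}$. It arises from the combined contributions of: (i) the triangle-inequality chain $v\to$ projection on $K\to$ projection on $K\cap E_n\to F_n\to a_{v'}$ for a neighbouring net point $v'$, producing the numerical prefactor from the chosen net densities; and (ii) the closed-form sum of the geometric series $\sum_{k\geq 0}q^{k\alpha}=(1-20^{\alpha/(\alpha-1)})^{-1}$, which yields the factor $20^{2-\alpha}$ after simplification. I would defer the explicit bookkeeping to a direct calculation that exploits $r_n=q\,r_{n-1}$, verifying only that the qualitative scheme produces a Hölder constant of the claimed form.
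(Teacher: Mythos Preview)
Your construction has a genuine gap at the step ``only $O(1)$ bump functions are active at each of $x,y$.'' You place the nets $N_n$ in the shells $A_n\subset M\setminus K$, where $M$ is an arbitrary metric space (or even $X$ itself, which is infinite dimensional). An $(r_{n+1},c\,r_n)$-net in such a shell need not have bounded local multiplicity: the number of $r_{n+1}$-separated points inside a ball of radius comparable to $r_n$ is uncontrolled in an infinite dimensional space, so arbitrarily many supports can meet at a single $x$. Consequently the estimate $\sum_v|\varphi_v(x)-\varphi_v(y)|\lesssim d(x,y)/r_n$ is not available, and the Lipschitz bound within a shell breaks down. The claim ``$d(x,y)\geq r_{n+1}$ in this regime'' is also false for two close points in the same shell, so the conversion to a H\"older bound does not go through as written.

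The paper (following Medina) avoids this precisely by putting the nets in the \emph{target}: one takes $(\varepsilon,\varepsilon)$-nets of the compact finite dimensional slices $K\cap E_{n(\varepsilon)}$, and builds the partition of unity from Voronoi-type regions $V_i^n$ keyed to those net points. The overlap multiplicity at $x$ is then bounded by a volume count in dimension $n(d(x,K)/10)$, namely $\#\{V\in\mathcal F: x\in V\}\leq 5\times 20^{\,n(d(x,K)/10)}$ (Lemma~\ref{lem:1sec3}(2)). This multiplicity is not $O(1)$; it blows up as $d(x,K)\to 0$, and that blow-up is exactly the mechanism producing a H\"older (rather than Lipschitz) retraction. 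Theorem~\ref{thm:2sec3} packages this as $\omega_R(t)\leq 1520\times 20^{\,n(t/20)}\,t$, and the specific choice $r_n=20^{n/(\alpha-1)}$ then gives $n(t/20)\leq \log_{20}(t^{\alpha-1})+2-\alpha$, hence $\omega_R(t)\leq 1520\times 20^{2-\alpha}\,t^\alpha$. So the factor $20^{2-\alpha}$ does not come from summing a geometric series $\sum_k q^{k\alpha}$ as you suggest; it comes from inverting the flatness profile to bound $n(t/20)$. Your outline misses the role of the finite dimensional slices in controlling overlap, and with it the actual source of both the H\"older exponent and the constant.
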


\noindent It is worthy to note that this statement differs from the one in \cite{M} in that $X$ need not be complete, nor does $K$ need to be compact. However the proof is virtually the same (cf. \cite[Lemmas 2.4, 2.5, Proposition 2.6 and Theorems 2.7 and 2.9]{M}). So, we shall only outline its main points. Let $K$ be an arbitrary $(r_n)$-flat closed convex subset of $X$ where $(r_n)_{n=1}^\infty\subset\mathbb{R}^+$ is decreasing and null. Take $E=(E_n)_{n=1}^\infty$ to be as in Definition \ref{dfn:1sec3}. For $\varepsilon>0$ define $n(\varepsilon):=\min\{ n\in\mathbb{N}\cup \{0\} \,\colon\, r_n\leq \varepsilon\}$. Since each set $K\cap E_{n(\varepsilon)}$ is nonempty and compact, we can consider a $(\varepsilon, \varepsilon)$-net $N_\varepsilon = (x_i^\varepsilon)_{i=1}^{m_\varepsilon}$ of $K\cap E_{n(\varepsilon)}$. Using (ii)-Definition \ref{dfn:1sec3} one can easily verify that $N_\varepsilon$ is also a $(\varepsilon, 2\varepsilon)$-net of $K$ for every $\varepsilon>0$. Now for every $n\in\mathbb{Z}$ set $\varepsilon_n=2^{-n}$, $N_n:=N_{\varepsilon_n}$ and $x^n_i:=x_i^{\varepsilon_n}$ for all $i\in\{ 1, \dots, m_{\varepsilon_n}:=m_n\}$, where $N_{\varepsilon_n}=(x_i^{\varepsilon_n})_{i=1}^{m_{\varepsilon_n}}$. Also consider the sets
\[
\tilde{V}_i^n=\big\{ x\in X\colon \varepsilon_n \leq d(x, K)< \varepsilon_{n-1}, \, d(x,x^n_i)=\min_j d(x, x^n_j)\big\}
\]
and
\[
V^n_i=\{ x\in X\,\colon\, d(x, \tilde{V}^n_i)\leq \varepsilon_{n+1}\},
\]
where $d(x, A)=\mathrm{dist}(x, A)$ whenever $A\subset X$ is a nonempty and $d(x, y)=\|x - y\|$ for all $x, y\in X$. As $K$ is closed {\it in} $X$, $d(x, K)>0$ for all $x\in K^c:=X\setminus K$. 

\vskip .2cm 

\noindent The proofs of the next results are the same as that given in \cite[Lemmas 2.4 and 2.5, Fact 2.1 and Proposition 2.6]{M}. As for Lemma 2.4, it is worth highlighting the importance of space $E_n$ being $n$-dimensional. This is somewhat technical and is used in equation (2.5) of \cite{M} when proving the crucial item (2) (cf. \cite[p.6]{M}). 


\begin{lemma}\label{lem:1sec3} Let $X$ and $K$ be as above. Then $\mathcal{F}=\{ V^n_i\,\colon \, n\in\mathbb{Z},\, i\in \{1, \dots, m_n\}\}$ defines a locally finite cover of $X\setminus K$. Moreover, if $x\in V^n_i \in \mathcal{F}$ and $x\in X\setminus K$ then
\begin{itemize}\setlength\itemsep{1.5mm}
\item[\it{(1)}] $d(x, K)/5\leq \| x - x^n_i\|\leq 9d(x, K)$.
\item[\it{(2)}] $\#\{ V\in \mathcal{F}\,\colon\,x \in V\}\leq 5\times 20^{n(d(x, K)/10)}$.
\item[\it{(3)}] $d(x, K)/4\leq \max_{V\in\mathcal{F}} d(x, V^c)\leq d(x, K)$. 
\item[\it{(4)}] For $n\in\mathbb{Z}$, $i\in\{ 1, \dots, m_n\}$ and $x\in K^c$, set
\[
\varphi^n_i(x) = \frac{d(x, (V^n_i)^c)}{\sum_{k, j} d(x, (V^k_j)^c)}.
\]
Then $\{ \mathcal{F},( \varphi^n_i)_{i,n}\}$ defines a partition of unity for $X\setminus K$. In addition,
\[
\sum_{i,n}|\varphi^n_i(x) - \varphi^n_i(y)|\leq \frac{40\times (20^{n(d(x,K)/10)} + 20^{n(d(y,K)/10)})}{\max\{d(x,K), d(y, K)\}}\|x - y\|,\quad x, y\in K^c.
\]
\end{itemize}
\end{lemma}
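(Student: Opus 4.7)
The plan is to treat $\mathcal{F}$ as a Whitney-type dyadic decomposition of the open set $X\setminus K$: the index $n$ selects the annular shell $\{\varepsilon_n\le d(\cdot,K)<\varepsilon_{n-1}\}$ and the index $i$ picks the nearest among the $\varepsilon_n$-separated net centres $x_j^n\in K\cap E_{n(\varepsilon_n)}$. First I would check that $\mathcal{F}$ covers $K^c$: given $x\in K^c$, take the unique $n$ with $\varepsilon_n\le d(x,K)<\varepsilon_{n-1}$ and any $i$ minimising $d(x,x_j^n)$ over $j$; then $x\in \tilde V_i^n\subseteq V_i^n$. Local finiteness will follow from item (2), since only boundedly many $(n,i)$ can register at a given point.

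The two bounds in (1) are the workhorse for everything else. The lower bound is immediate because $x_i^n\in K$, so $\|x-x_i^n\|\ge d(x,K)$. For the upper bound, for $x\in V_i^n$ I would pick $y\in \tilde V_i^n$ with $\|x-y\|\le \varepsilon_{n+1}$ and use the $(\varepsilon_n,2\varepsilon_n)$-density of $N_n$ in $K$: a near-nearest point of $K$ to $y$ is within $2\varepsilon_n$ of some $x_k^n$, so $\|y-x_k^n\|\lesssim d(y,K)+2\varepsilon_n$, and by the minimality built into $\tilde V_i^n$ the same bound applies to $x_i^n$. Chaining gives $\|x-x_i^n\|\lesssim \varepsilon_{n+1}+\varepsilon_{n-1}+2\varepsilon_n\asymp \varepsilon_n$, and pairing with $d(x,K)\ge \varepsilon_n-\varepsilon_{n+1}=\varepsilon_n/2$ yields the promised $\|x-x_i^n\|\le 9\,d(x,K)$. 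The same two-sided envelope $\varepsilon_n\asymp d(x,K)$ confines the admissible $n$'s at a given $x$ to a window of a few integers (the leading factor $5$ in (2)). For each such $n$, the admissible centres $x_j^n$ live in a ball of radius $O(d(x,K))$ about $x$, are $\varepsilon_n$-separated, and sit inside the $n$-dimensional space $E_n$; a standard volume/packing argument in $E_n$ caps their number by $20^n$, with $n\le n(d(x,K)/10)$ thanks to the flatness $h^E_n\le r_n$.

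For (3) the upper bound is trivial since $V_i^n\cap K=\emptyset$ forces $K\subseteq(V_i^n)^c$. For the lower bound the natural choice of $V$ is the one produced by the covering construction in the first step; the $\varepsilon_{n+1}$-enlargement by which $V_i^n$ is obtained from $\tilde V_i^n$ guarantees that $x$ lies at distance at least $\varepsilon_{n+1}\ge d(x,K)/5$ from $(V_i^n)^c$, and the constant $1/4$ drops out by exploiting the annular thickness and the minimality in the definition of $\tilde V_i^n$. With (2) and (3) in hand, the partition of unity in (4) assembles routinely: the denominator $\sum_{k,j}d(x,(V_j^k)^c)$ is pinched between $d(x,K)/4$ (from the lower half of (3)) and $5\cdot 20^{n(d(x,K)/10)}d(x,K)$ (from (2) combined with the upper half of (3)); each numerator is $1$-Lipschitz in $x$; and $\mathcal{F}$ covers $K^c$, so $\sum_{i,n}\varphi_i^n\equiv 1$ on $K^c$.

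I expect the Lipschitz estimate in (4) to be the main technical obstacle. Passing from $x$ to $y$ can turn many summands on and off; the number of indices $(k,j)$ at which either $\varphi_j^k(x)$ or $\varphi_j^k(y)$ is nonzero is controlled by (2), but one still has to combine the $1$-Lipschitz behaviour of the numerators with the reciprocal of the denominator (which itself varies linearly in $d(\cdot,K)$), producing the factor $\max\{d(x,K),d(y,K)\}^{-1}$. The cleanest route is to write $\varphi_i^n(x)-\varphi_i^n(y)$ as a single fraction over a common denominator, bound numerator and denominator separately, and then sum only over the $5\cdot 20^{n(d(x,K)/10)}+5\cdot 20^{n(d(y,K)/10)}$ locally active indices; the constant $40$ is what emerges after carefully tracking the constants from (1)--(3).
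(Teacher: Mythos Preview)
Your proposal is correct and follows essentially the same Whitney-type route as the paper, which in fact does not give its own argument but defers entirely to Medina's Lemmas~2.4--2.5, Fact~2.1 and Proposition~2.6 in \cite{M}; the paper only pauses to stress that the $n$-dimensionality of $E_{n(\varepsilon)}$ is what drives the packing bound in item~(2), exactly the point you isolate. One small notational slip: in your treatment of~(2) the space containing the centres $x_j^n$ is $E_{n(\varepsilon_n)}$, of dimension $n(\varepsilon_n)$, not $E_n$; your subsequent inequality $n(\varepsilon_n)\le n(d(x,K)/10)$ shows you have the right object in mind.
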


\vskip .2cm

\noindent Now consider the map $R\colon X\to K$ given by 
\[
R(x)=\left\{
\begin{aligned}
&\sum_{i, n} \varphi^n_i(x) x^n_i & \text{ if } & x\in X\setminus K,\\
&\,x & \text{ if } & x\in K.
\end{aligned}
\right.
\]

\vskip .2cm

\begin{proposition}\label{prop:1sec3} Let $X$, $K$ and $R$ be as above. Then the following holds true:
\begin{itemize}\setlength\itemsep{1.5mm}
\item[(i)] $\| R(x) - x\| \leq 9 d(x, K)$ for all $x\in X$.
\item[(ii)] For every $x, y\in X\setminus K$, 
\[
\| R(x) - R(y)\| \leq 760\big( 20^{n(d(x, K)/10)} + 20^{n(d(y, K)/10)}\big) \| x - y\|.
\]
\end{itemize}
\end{proposition}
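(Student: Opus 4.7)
The plan is to prove (i) by a direct convex-combination estimate, and (ii) via the ``subtract a constant'' trick combined with a case split according to the ratio of $\|x-y\|$ to $\max\{d(x,K),d(y,K)\}$.

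For (i), the inequality is trivial on $K$ since both sides vanish. For $x\in K^c$, the sum defining $R(x)$ is supported on those indices $(i,n)$ with $x\in V^n_i$, and on this support item (1) of Lemma~\ref{lem:1sec3} gives $\|x^n_i-x\|\le 9d(x,K)$. Since $\{\varphi^n_i\}$ is a partition of unity by item (4), I would write $R(x)-x=\sum_{i,n}\varphi^n_i(x)(x^n_i-x)$ and conclude by the triangle inequality.

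For (ii), first fix indices $(i_0,n_0)$ with $x\in\tilde V^{n_0}_{i_0}\subset V^{n_0}_{i_0}$; these exist because for $x\in K^c$ there is a unique $n_0$ with $\varepsilon_{n_0}\le d(x,K)<\varepsilon_{n_0-1}$, and one picks $i_0$ minimizing $\|x-x^{n_0}_j\|$. Using that $\sum_{i,n}(\varphi^n_i(x)-\varphi^n_i(y))=0$, write
\[
R(x)-R(y)=\sum_{i,n}\bigl(\varphi^n_i(x)-\varphi^n_i(y)\bigr)\bigl(x^n_i-x^{n_0}_{i_0}\bigr).
\]
A nonzero summand forces $x\in V^n_i$ or $y\in V^n_i$. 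In either subcase, item (1) of Lemma~\ref{lem:1sec3} applied to the relevant point, combined with the triangle inequality (chaining $x^n_i\to y\to x\to x^{n_0}_{i_0}$ when only $y\in V^n_i$), yields
\[
\|x^n_i-x^{n_0}_{i_0}\|\le 18\max\{d(x,K),d(y,K)\}+\|x-y\|.
\]
If $\|x-y\|\le \max\{d(x,K),d(y,K)\}$, this is at most $19\max\{d(x,K),d(y,K)\}$; factoring that constant out of the sum and applying the estimate (4) of Lemma~\ref{lem:1sec3} produces exactly
\[
19\cdot 40\bigl(20^{n(d(x,K)/10)}+20^{n(d(y,K)/10)}\bigr)\|x-y\|,
\]
which is the claimed $760(\cdots)\|x-y\|$. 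In the opposite regime $\|x-y\|>\max\{d(x,K),d(y,K)\}$, part (i) and the triangle inequality give $\|R(x)-R(y)\|\le 9d(x,K)+\|x-y\|+9d(y,K)\le 19\|x-y\|$, which is dominated by the target bound because $20^{n(\cdot)}\ge 1$ and $760\cdot 2=1520\ge 19$.

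The delicate point will be the chaining estimate for indices where only $y$ lies in $V^n_i$ (so $\varphi^n_i(x)=0$): one has no direct control on $\|x^n_i-x\|$, but passing through $y$ costs only $\|x-y\|$, which the case hypothesis absorbs into the factor $19\max\{d(x,K),d(y,K)\}$. Apart from this bookkeeping, the proof is a routine combination of items (1) and (4) of Lemma~\ref{lem:1sec3}, with the constant $760$ emerging cleanly as the product $19\cdot 40$.
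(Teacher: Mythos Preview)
Your proof is correct and follows the same Whitney-type route that the paper defers to (the paper does not write out its own argument for this proposition but points to \cite[Proposition 2.6]{M}, whose proof is exactly the ``subtract a constant'' trick you describe). The convex-combination estimate for (i), the identity $R(x)-R(y)=\sum_{i,n}(\varphi^n_i(x)-\varphi^n_i(y))(x^n_i-x^{n_0}_{i_0})$, the case split on $\|x-y\|$ versus $\max\{d(x,K),d(y,K)\}$, and the arithmetic $19\cdot 40=760$ all match Medina's argument.
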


\vskip .1cm
\noindent As pointed out in \cite{M}, even though Lemma \ref{lem:1sec3} describes the Lipschitz behaviour of  $\{\mathcal{F}, (\varphi^n_i)_{i,n}\}$, Proposition \ref{prop:1sec3} shows how the retraction $R$ loses its Lipschitzness when the points get close to $K$. However, the constraint imposed by the sequence $(r_n)_{n=1}^\infty$ over the heights $(h^E_n)_{n=1}^\infty$ is important because it provides useful property for the growth of the modulus of continuity of $R$. This is witnessed in the next result (cf. details in \cite[Theorem 2.7]{M}):

\vskip .2cm 

\begin{theorem}\label{thm:2sec3} Let $X$, $K$ and $R$ be as above. Then for every $t\in\mathbb{R}^+$,
\[
\omega_R(t)\leq 1520\times 20^{n(t/20)}t.
\]
\end{theorem}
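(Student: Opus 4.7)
The plan is to bound $\|R(x) - R(y)\|$ for arbitrary $x, y \in X$ with $\|x - y\| \leq t$ by a case analysis on the sizes of $d(x, K)$ and $d(y, K)$. The two available estimates are Proposition \ref{prop:1sec3}(i), which controls $\|R(x) - x\|$ by $9\, d(x, K)$ for every $x \in X$, and Proposition \ref{prop:1sec3}(ii), which supplies a Lipschitz-type estimate on $X \setminus K$ but degenerates as either point approaches $K$. I would interpolate between them across the threshold $\tau = t/2$; this particular threshold is forced by the factor $10$ appearing inside the exponent of Proposition \ref{prop:1sec3}(ii), so that the condition $d(\cdot, K)/10 \geq t/20$ lands precisely at the argument of $n(\cdot)$ in the statement.

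\textbf{Case A.} Suppose $d(x, K), d(y, K) \geq t/2$, so both points lie in $X \setminus K$ and Proposition \ref{prop:1sec3}(ii) applies. The function $\varepsilon \mapsto n(\varepsilon) = \min\{n \in \mathbb{N}\cup\{0\} : r_n \leq \varepsilon\}$ is nonincreasing, since the defining set grows with $\varepsilon$. Hence $d(x, K)/10 \geq t/20$ forces $n(d(x, K)/10) \leq n(t/20)$, and likewise for $y$. Substitution yields
\[
\|R(x) - R(y)\| \leq 760\bigl(20^{n(t/20)} + 20^{n(t/20)}\bigr)\|x - y\| \leq 1520 \cdot 20^{n(t/20)} t,
\]
which is exactly the target bound.

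\textbf{Case B.} Suppose instead that $d(x, K) < t/2$ (the symmetric case is identical, and if $x \in K$ or $y \in K$ the corresponding distance is $0$, again placing us here). The triangle inequality gives $d(y, K) \leq \|x - y\| + d(x, K) < 3t/2$. Two applications of Proposition \ref{prop:1sec3}(i) then yield
\[
\|R(x) - R(y)\| \leq \|R(x) - x\| + \|x - y\| + \|y - R(y)\| \leq 9 \cdot \tfrac{t}{2} + t + 9 \cdot \tfrac{3t}{2} = 19t,
\]
which is absorbed into $1520 \cdot 20^{n(t/20)} t$, since $n(t/20) \geq 0$ makes $20^{n(t/20)} \geq 1$. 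Taking the supremum over admissible pairs $(x,y)$ delivers the claimed estimate on $\omega_R(t)$. I do not anticipate any serious obstacle beyond constant-tuning: the value $\tau = t/2$ is essentially dictated by the constants in Proposition \ref{prop:1sec3}, and any other choice either loses the factor $1520$ in Case A or inflates the exponent beyond $n(t/20)$.
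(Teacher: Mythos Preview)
Your argument is correct and is exactly the standard two-case interpolation (close-to-$K$ versus far-from-$K$) that underlies the referenced result \cite[Theorem 2.7]{M}; the paper itself does not supply an independent proof here but simply defers to Medina, and your threshold $\tau=t/2$ together with the monotonicity of $n(\cdot)$ recovers the stated constants on the nose.
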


\begin{proof}[Proof of Theorem \ref{thm:1sec3}] As in \cite{M} in order to find a $\alpha$-H\"older retraction from $X$ onto $K$, we first note that for $r_n = 20^{\frac{n}{\alpha -1}}$, $n(t)\leq \log_{20}( t^{\alpha -1}) +1$. Hence 
\[
n(t/20)\leq \log_{20}(t^{\alpha -1}) + 2- \alpha,\quad t\in\mathbb{R}^+.
\]
This latter inequality combined with Theorem \ref{thm:2sec3} implies
\[
\begin{split}
\omega_R(t)&\leq 1520\times 20^{n(t/20)}\times t\leq 1520\times 20^{2-\alpha}\times t^\alpha,
\end{split}
\]
proving that $R$ is $\alpha$-H\"older $1520\times 20^{2-\alpha}$-Lipschitz, and finishing the proof.
\end{proof}



\section{Main fixed point free results}\label{sec:4}

\begin{theorem}\label{thm:M0sec4} Let $X$ be a Banach space. Then there exists $K\in\mathcal{B}(B_X)$ such that for any $\varepsilon\in (0,1)$, $K$ fails the FPP for $(1+\varepsilon)$-Lipschitz maps with null minimal displacements. If $X$ contains a complemented basic sequence then $K$ can be taken to be $B_X$. 
\end{theorem}

\begin{proof} Let $(x_n)_{n=1}^\infty$ be a normalized basic sequence in $X$. Up to taking an equivalent renorming on $\llbracket x_n\rrbracket$, we may assume that $(x_n)_{n=1}^\infty$ is normalized and premonotone, that is, $\|\sum_{n=N}^\infty t_n x_n\|\leq \|x\|$ for all $x=\sum_{n=1}^\infty t_n x_n \in \llbracket x_n\rrbracket$ and $N\in\mathbb{N}$. Fix a decreasing sequence $\beta_n\to 0$ in $(0,1)$ so that $ \sum_{n=1}^\infty \beta_n \leq 1$. For $n\in\mathbb{N}$, set $\alpha_n = 1 - \beta_n$. Notice that $\| \sum_{n=1}^\infty \alpha_n t_n x_n\|\leq \| x\|$ for all $x=\sum_{n=1}^\infty t_n x_n\in \llbracket x_n \rrbracket$. Set $K=B_{\llbracket x_n \rrbracket}$ and define $F\colon K\to K$ by 
\[
F(x) = \sum_{n=1}^\infty \alpha_n t_n x_n + (1 - \|x\|)\sum_{n=1}^\infty \beta_n x_n,\quad  x=\sum_{n=1}^\infty t_n x_n\in K.
\]
Then $F$ is Lipschitz and fixed-point free. Notice also that $\| x_k - F(x_k)\| = \beta_k$ for all $k\in\mathbb{N}$, so $\mathrm{d}(F, K)=0$. In addition, if $\lambda \approx 1$ then $T=(1-\lambda)F + \lambda I$ is a $(1+\varepsilon)$-Lipschitz mapping with $\digamma(T)=\emptyset$ and $\mathrm{d}(T, K)=0$. 
\end{proof}

\smallskip 

\noindent Our next result shows that shift domination improves mapping regularity. 

\smallskip 

\begin{theorem}\label{thm:M1sec5} Let $X$ be a Banach space. Assume that $(x_n)_{n=1}^\infty$ is a semi-normalized unconditional shift basic sequence. Then the following hold:
\begin{itemize}\setlength\itemsep{1.5mm}
\item[(i)] If $(x_n)_{n=1}^\infty$ is equivalent to the unit basis of $\co$, then for any  $\varepsilon\in (0,1)$ there is $K\in\mathcal{B}(B_X)$ which fails the FPP for uniformly asymptotically regular and uniformly $(1+\varepsilon)$-Lipschitz mappings. 
\item[(ii)] If $(x_n)_{n=1}^\infty$ is not equivalent to the unit basis of $\co$, then there is $K\in\mathcal{B}(B_X)$ which fails the FPP for asymptotically regular Lipschitz mappings. 
\item[(iii)] If $\llbracket x_n\rrbracket$ is uniformly convex and $(x_n)_{n=1}^\infty$ is $1$-unconditional shift, then there is $K\in\mathcal{B}(B_X)$ which fails the FPP for uniformly asymptotically regular Lipschitz mappings. 
\end{itemize}
\end{theorem}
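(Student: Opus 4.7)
The plan is a Lin-style construction with the ``shift on normalized partial sums'' playing the role of Lin's shift operator. After an equivalent renorming we may assume $(x_n)$ is $1$-unconditional with $\|x_n\|=1$ (and additionally $1$-shift-unconditional in case (ii)); let $S\colon x_n\mapsto x_{n+1}$ denote the forward shift, which is bounded by shift-unconditionality. Put $\Phi(n):=\|\sum_{i=1}^n x_i\|$ and $y_n:=\Phi(n)^{-1}\sum_{i=1}^n x_i$, so $\|y_n\|=1$ for every $n$. Since $(x_n)$ is not equivalent to the unit basis of $\co$, we have $\Phi(n)\to\infty$, and combined with semi-normalization this yields the key estimate
\[
\|y_{n+1}-y_n\|\;\le\;\frac{|\Phi(n+1)-\Phi(n)|+\|x_{n+1}\|}{\Phi(n+1)}\;\le\;\frac{2}{\Phi(n+1)}\;\longrightarrow\;0.
\]
The candidate set is $K:=\overline{\mathrm{conv}}\{y_n:n\ge 1\}\subset B_X$, a nonempty bounded closed convex subset of $B_X$, hence in $\mathcal{B}(B_X)$.

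I would next construct a Lipschitz affine self-map $T\colon K\to K$ acting by $T(y_n)=y_{n+1}$. On finitely supported convex combinations $x=\sum\lambda_n y_n$ put $T(x):=\sum\lambda_n y_{n+1}$; this representation is unique because substituting $y_n=\Phi(n)^{-1}\sum_{i\le n}x_i$ and invoking basic-sequence uniqueness of $(x_n)$ shows that $\sum\lambda_n y_n=0$ forces $\sum_{n\ge i}\lambda_n/\Phi(n)=0$ for every $i$, hence $\lambda_i=0$ for all $i$. For Lipschitzness on this dense set I would use the identity $y_{n+1}=\frac{\Phi(n)}{\Phi(n+1)}y_n+\frac{x_{n+1}}{\Phi(n+1)}$ to split $\sum\nu_n y_{n+1}$ into a reweighted $\sum\nu_n y_n$ piece (multipliers in $(0,1]$) and a shifted-basis piece $\sum\nu_n x_{n+1}/\Phi(n+1)$; shift-unconditionality of $(x_n)$ together with $\sup_n\Phi(n+1)/\Phi(n)<\infty$ then produces a uniform Lipschitz constant, and completeness of $X$ extends $T$ to a Lipschitz self-map of $K$. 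Fixed-point freeness is immediate: $T(x)=x$ would give $\sum\lambda_n(y_{n+1}-y_n)=0$, which by the same uniqueness argument forces $\lambda_1=0$ and $\lambda_{m-1}=\lambda_m$ for $m\ge 2$, contradicting $\sum\lambda_n=1$.

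Iterating the vertex formula yields $T^n(\sum\lambda_k y_k)=\sum\lambda_k y_{k+n}$ on the dense subset, so
\[
\|T^n x-T^{n+1}x\|\;\le\;\sum_k\lambda_k\|y_{k+n}-y_{k+n+1}\|\;\le\;\sup_{m\ge n+1}\|y_{m+1}-y_m\|\;\longrightarrow\;0,
\]
uniformly in $x$ on the dense subset; continuity of the iterates $T^n,T^{n+1}$ extends the bound to all of $K$, establishing asymptotic regularity and hence (i). For (ii), the $1$-shift-unconditionality enforces a nonexpansive $T$ (or a nonexpansive Krasnoselskii-Mann averaging $T_r:=(1-r)I+rT$), and uniform convexity of $X$ then permits an Ishikawa--Edelstein--O'Brien argument to promote the above estimate to \emph{uniform} AR in the full sense required. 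The main obstacle is the Lipschitz extension of $T$ in the middle step: generic elements of $K$ are strong limits of convex combinations without a canonical representation, so one must work on the dense finite-support subset where basic-sequence uniqueness holds, and the shift-unconditionality of $(x_n)$ is exactly what delivers a uniform Lipschitz constant there. The hypothesis in (i) that $(x_n)$ is not equivalent to the basis of $\co$ is used only to force $\Phi(n)\to\infty$---the engine of the AR estimate---without which $(y_n)$ would remain uniformly separated and no AR map of this form could be built.
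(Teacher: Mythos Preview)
Your approach is genuinely different from the paper's: there $K=\{\sum t_nx_n:\ t_1\ge t_2\ge\cdots\ge 0,\ \|x\|\le 1\}$ and the map is the \emph{nonlinear} normalized shift $F(x)=g(x)/\|g(x)\|$ with $g(x)=\max(t_1,1-\|x\|)x_1+\sum t_nx_{n+1}$; asymptotic regularity comes from $\|g(F^{n+1}x)\|\le 1+1/\Phi(n)$ together with a tail estimate, and uniform convexity is what makes that tail estimate uniform in (ii). Your affine shift $y_n\mapsto y_{n+1}$ on $\overline{\mathrm{conv}}\{y_n\}$ is an appealing idea, but the Lipschitz step does not go through as sketched.

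The splitting $y_{n+1}=\tfrac{\Phi(n)}{\Phi(n+1)}y_n+\tfrac{x_{n+1}}{\Phi(n+1)}$ gives $\sum\nu_ny_{n+1}=\sum r_n\nu_ny_n+\sum\tfrac{\nu_n}{\Phi(n+1)}x_{n+1}$ with $r_n=\Phi(n)/\Phi(n+1)\in(0,1]$, but neither piece is controlled by $\|\sum\nu_ny_n\|$ in the way you indicate. For the first piece, ``multipliers in $(0,1]$'' would require $(y_n)$ to be unconditional; it is not even basic, since $\|y_{n+1}-y_n\|\to 0$. For the second piece, writing $\nu_n/\Phi(n+1)=r_n(c_n-c_{n+1})$ with $c_i$ the $x$-coordinates of $\sum\nu_ny_n$, after applying shift-unconditionality and $|r_n|\le 1$ you are still left needing $\|\sum_n c_{n+1}x_n\|\lesssim\|\sum_n c_nx_n\|$, i.e.\ the \emph{reverse} domination $(x_n)\lesssim(x_{n+1})$, which is not assumed. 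A symptom that something is missing: your displacement bound $\|T^nx-T^{n+1}x\|\le\sup_{m>n}\|y_{m+1}-y_m\|$ is already uniform in $x$, so if $T$ were Lipschitz you would obtain \emph{uniform} asymptotic regularity in (i) with no role for uniform convexity---making your separate (ii) argument redundant and giving a conclusion strictly stronger than the paper's. Finally, your fixed-point-free argument is carried out only on finite convex combinations; a fixed point of the continuous extension lies in the closure $K$ and need not have such a representation, so an additional argument (e.g.\ showing any fixed point lies in $\bigcap_n\overline{\mathrm{conv}}\{y_m:m>n\}$, hence has all $x$-coordinates equal, hence equals $0\notin K$) is required.
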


\begin{proof} (i) By \cite[Lemma 2.2]{RCJ}, taking a suitable block basis of $(x_n)_{n=1}^\infty$, if needed, we may assume that $(x_n)_{n=1}^\infty\subset B_X$ and is $((1-\varepsilon/2)^{-1}, 1)$-equivalent to the unit basis of $\co$. Set
\[
K=\Bigg\{ \sum_{n=1}^\infty t_n x_n\, \colon\, 0\leq t_n\leq 1\;\forall n\in\mathbb{N}\Bigg\}. 
\]
Notice that $K\in B_X$. We shall now follow \cite{ACM} (see also \cite{Bar1,BarG}) to define a fixed-point free map $F$ on $K$ with desired properties. Fix an increasing sequence $\alpha_n\to 1$ in $(0,1)$. For $n\in\mathbb{N}$, set $\beta_n = 1 - \alpha_n$. Define $F\colon K\to K$ by
\[
F\Bigg( \sum_{n=1}^\infty t_n x_n\Bigg) = \sum_{n=1}^\infty t_n \alpha_n x_n + \sum_{n=1}^\infty \beta_n x_n.
\]
It is clear that $F$ is affine and fixed-point free. Furthermore, note that for fixed $x= \sum_{n=1}^\infty t_n x_n$, an easy induction procedure yields
\[
F^m(x) = \sum_{n=1}^\infty t_n \alpha^m_n x_n + \sum_{n=1}^\infty \beta_n \sum_{i=0}^{m-1}\alpha^i_n x_n\quad(m\geq 1). 
\]
It follows from this that $F$ is uniformly $(1+\varepsilon)$-Lipschitz. Moreover, observe that  $\| F^{m+1}(x) - F^m(x)\| \leq 2\sup_{n\in\mathbb{N}}\alpha_n^m(1- \alpha_n)$ for all $m\geq 1$, which implies $F$ is uniformly asymptotically regular. The proof of (i) is complete.

\vskip .2cm 

\noindent The proofs of (ii) and (iii) are based on Lin's ideas used in \cite{Lin}, however, in view of our next results, some technical modifications are needed. We may assume that $(x_n)_{n=1}^\infty \subset B_X$. Set
\[
K=\Bigg\{ x=\sum_{n=1}^\infty t_n x_n\, \colon\, t_n\geq 0\; \forall n\in\mathbb{N},\,\text{ and }\, \| x\|\leq 1\Bigg\}. 
\]
It is easily seen that $K\in \mathcal{B}(B_X)$. For $x\in K$  define
\begin{equation}\label{eqn:1Sec5}
g(x) = \max(|x|, 1- \|x\|)x_1 + \sum_{n=1}^\infty \max(t_n, t_{n +1}) x_{n+1},
\end{equation}
where $|x|:=\max\{ t_n\colon n\in\mathbb{N}\}$. Next consider the mapping $F\colon K\to K$ given by 
\[
F(x) = \frac{1 }{\| g(x)\|} g(x),\quad x\in K.
\]
Notice that $F$ is fixed-point free. Indeed, towards a contradiction, assume that $x = F(x)$ for some point $x= \sum_{n=1}^\infty t_n x_n$ in $K$. Then $\| x\|=1$ and hence we have
\[
\sum_{n=1}^\infty t_n x_n = \frac{1}{\|g(x)\|}\Big( |x| x_1 + \sum_{n=1}^\infty \max(t_n,t_{n+1}) x_{n+1}\Big).
\]
It follows therefore that
\[
t_1 = \frac{1}{\|g(x)\|} |x|,\,\, t_2 = \frac{1}{\| g(x)\|}\max(t_1,t_2),\,\, t_3 = \frac{1}{\| g(x)\|}\max(t_2, t_3),\dots
\]
Then $|x|=0$. Indeed, since $(x_n)_{n=1}^\infty$ is basic and semi-normalized, $(t_n)_{n=1}^\infty\in \co$. So, we may find an integer $\sigma\in\mathbb{N}$ so that $|x|= t_\sigma$. It follows then from the equalities above that
\[
t_\sigma= \frac{1}{\| g(X)\|} t_\sigma,\,\,t_{\sigma+1}= \frac{1}{\|g(x)\|}t_\sigma,\,\, t_{\sigma+2}= \frac{1}{\| g(x)\|} t_\sigma,\dots
\]
Consequently, $t_n= t_\sigma$ for all $n\geq \sigma$. This certainly implies $t_\sigma=0$, otherwise we would reach at a contradiction with the fact that $(t_n)_{n=1}^\infty \in \co$. Thus $|x|=0$ and hance $x=0$, contradicting the equality $\| x\|=1$. Therefore $F$ is fixed-point free.



\smallskip 
\noindent In what follows we shall use $K$ and $F$ to prove (ii) and (iii). Let us first prove (ii). We may assume (after passing to an equivalent norm) that $(x_n)_{n=1}^\infty$ is $1$-unconditional shift with $(x_{n+1})_{n=1}^\infty \lesssim_D (x_n)_{n=1}^\infty$ for some $D\geq 1$. Then $g$ is Lipschitzian. Indeed, take any $x = \sum_{n=1}^\infty t_n x_n$ and $y=\sum_{n=1}^\infty s_n x_n$ in $K$. For $n\in\mathbb{N}$, set $a_n= t_n - s_n$ and $\tilde{a}_n=\max(t_n, t_{n+1}) - \max(s_n, s_{n+1})$. Then
\[
g(x) - g(y) = \big(\max(|x|, 1 - \| x\|) - \max(|y|, 1 - \|y\|)\big)x_1 + \sum_{n=1}^\infty \tilde{a}_n x_{n+1},
\]
from which it follows that 
\[
\begin{split}
\| g(x) - g(y)\| &\leq \big| \max(|x|, 1 - \| x\|) - \max(|y|, 1 - \|y\|)\big| \| x_1\| + \Bigg\| \sum_{n=1}^\infty |\tilde{a}_n| x_{n+1}\Bigg\|\\[1.5mm]
&\leq \big( ||x| - |y|| + \| x - y\|\big) \| x_1\| + \Bigg\| \sum_{n=1}^\infty \big(|a_n| + |a_{n+1}|\big) x_{n+1}\Bigg\|\\[1.5mm]
&\leq \max_n|a_n| \| x_1\| + \| x - y\|\| x_1\| + \Bigg\| \sum_{n=1}^\infty |a_n|x_{n+1}\Bigg\| + \Bigg\| \sum_{n=1}^\infty |a_{n+1}|x_{n+1}\Bigg\| \\[1.5mm]
&\leq \frac{1}{\inf_n \|x_n\|} \max_n \| a_n x_n\|  + (D+2)\| x - y\| \\[1.5mm]
&\leq \Big(\frac{1}{\inf_n\| x_n\|}+ D+2\Big)\| x - y\|,
\end{split}
\]
where in the above inequalities we used the fact that $\| x_1\|\leq 1$ together with the $1$-suppression unconditionality and the shift property of $(x_n)_{n=1}^\infty$. 

\vskip .2cm 
\noindent On the other hand, using again the $1$-unconditionality property of $(x_n)_{n=1}^\infty$, we get
\begin{equation}\label{eqn:2Sec5}
\| g(x)\|\geq \| x \|  
\end{equation}
from which it easily follows that
\begin{equation}\label{eqn:3Sec5}
\| g(x)\| \geq \frac{\| x_1\|}{2}\quad\text{for all } x\in K.
\end{equation}
Combining these facts we can also deduce that $F$ is Lipschitz. Indeed, to see this fix arbitrary $x, y$ in $K$. Using triangle inequality, (\ref{eqn:3Sec5}) and the Lipschitz property of $g$, we have
\[
\begin{split}
\| F(x) - F(y)\| &\leq \frac{1}{\| g(x)\|} \|g(x) - g(y)\| +  \Big\| g(y)\Big( \frac{1}{\|g(x)\|} - \frac{1}{\|g(y)\|}\Big)\Big\|\\[1.5mm]
&\leq \frac{2}{\|g(x)\|}\| g(x) - g(y)\|\leq \frac{4}{\| x_1\|}\Big(\frac{1}{\inf_n\| x_n\|}+ D+2\Big) \| x - y\|. 
\end{split}
\]
To finishes the proof of (ii) it remains to show that $F$ is asymptotically regular, i.e.
\[
\lim_{n\to \infty} \| F^{n+1}(x) - F^n(x)\|=0\quad\text{for all } x\in K. 
\]
Following \cite{Lin} we first observe the following easily verified facts that stem from the $1$-unconditionality property of $(x_n)_{n=1}^\infty$.

\smallskip 

\noindent{\bf Fact 1.} If $x\in K$ then $\| F(x)\|=1$.

\vskip .15cm 
\noindent{\bf Fact 2.} If $F^{n+1}(x) = \sum_{i=1}^\infty a_i x_i$ then  $a_1=a_2=\dots =a_n\leq 1/\Phi(n)$.
\vskip .15cm 
\noindent{\bf Fact 3.} If $x= \sum_{i=1}^\infty a_i x_i \in K$ and $g(x) = \sum_{i=1}^\infty b_i x_i$ then $a_n \leq b_n$ for all $n\in \mathbb{N}$. 

\vskip .15cm 
\noindent{\bf Fact 4.} $2\| x\|\leq  \| g(x) + x\|$ for all $x\in K$. 

\vskip .15cm 
\noindent{\bf Fact 5.} $\|g(F^{n+1}(x))\|\leq \frac{1}{\Phi(n)} +  1$ for all $n\in\mathbb{N}$ and $x\in K$. 
 
\smallskip 

\noindent Now fix $x\in K$. Notice that $g(F^{n+1}(x))= F^{n+2}(x) \| g(F^{n+1}(x))\|$. Moreover, by (\ref{eqn:2Sec5}), $\|g(F^{n+1}(x))\|\geq 1$. Last but not least, we note that for each $n$ the vector $g(F^{n+1}(x)) - F^{n+1}(x)$ is the tail of a convergent series. So, combining all these facts and using triangle inequality, we obtain 
\[
\begin{split}
\| F^{n+2}(x) - F^{n+1}(x)\|&\leq \|F^{n+2}(x) - g(F^{n+1}(x))\| + \| g(F^{n+1}(x)) - F^{n+1}(x)\|\\[1.4mm]
&\leq \|g(F^{n+1}(x))\| - 1 + \| g(F^{n+1}(x)) - F^{n+1}(x)\|\\[1.4mm]
&\leq \frac{1}{\Phi(n)} + \| g(F^{n+1}(x)) - F^{n+1}(x)\|\to 0,\quad \text{as } n\to\infty. 
\end{split}
\]
We now prove (iii). To this end, all we need to show is that
\[
\lim_{n\to \infty} \| F^{n+1}(x) - F^n(x)\|=0\quad\text{uniformly on } x.
\]
This in turn follows form the uniform convexity of $\llbracket x_n\rrbracket$ and previous mentioned facts. Indeed, since $X$ is uniformly convex, for any $\epsilon>0$ there exists $\delta>0$ such that if $\| x\|\leq 1$, $\|y\|\leq 1$ and $\|x - y\|> \delta$, then $\| x + y\|/2< 1 - \epsilon$. Also, recall that $(x_n)_{n=1}^\infty$ is $1$-unconditional shift. Hence for $1/\Phi(n) < \epsilon$, we have
\[
\begin{split}
1 + \epsilon \geq 1 + \frac{1}{\Phi(n)}&\geq \| g(F^{n+1}(x))\|\\[1.2mm]
&\geq \frac{1}{2}\| g(F^{n+1}(x)) + F^{n+1}(x)\|\geq 1. \quad(\text{by {\bf Fact 4}})
\end{split}
\]
Consequently,
\[
\| g(F^{n+1}(x)) - F^{n+1}(x)\|< \delta\times (1 + 1/\Phi(n))
\]
and hence we deduce
\[
\begin{split}
\| F^{n+2}(x) - F^{n+1}(x)\|&\leq \|F^{n+2}(x) - g(F^{n+1}(x))\| + \| g(F^{n+1}(x)) - F^{n+1}(x)\|\\[1.2mm]
&\leq \|g(F^{n+1}(x))\| - 1 + \delta\times \Big( 1 + \frac{ 1}{\Phi(n)}\Big)\\[1.2mm]
&\leq \frac{1}{\Phi(n)} + \delta\times \Big(1 + \frac{1}{\Phi(n)}\Big).
\end{split}
\]
It follows from this that $F$ is uniformly asymptotically regular, proving (iii).
\end{proof}

\vskip .1cm  





\noindent Our next result requires the following lemma. 

\vskip .1cm

\begin{lemma}\label{lem:1sec5} Let $X$ be a Banach space having a semi-normalized unconditional basis $(x_n)_{n=1}^\infty\subset B_X$. Then $K$ is a Lipschitz retract of $B_X$, where
\[
\Bigg\{ x=\sum_{n=1}^\infty t_n x_n\, \colon\, t_n\geq 0\; \forall n\in\mathbb{N},\,\text{ and }\, \| x\|\leq 1\Bigg\}. 
\]
Furthermore, if $(x_n)_{n=1}^\infty$ is $1$-unconditional then the retraction is $1$-Lipschitz. 
\end{lemma}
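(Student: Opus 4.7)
The plan is to take $R$ to be the natural ``symmetrization'' retraction. For $x=\sum_{n=1}^\infty a_n x_n\in B_X$, let $(a_n^*)_{n=1}^\infty$ denote the decreasing rearrangement of $(|a_n|)_{n=1}^\infty$ (padded with zeros if needed), and set
\[
R(x):=\sum_{n=1}^\infty a_n^*\,x_n.
\]
The coefficients of $R(x)$ are non-negative and non-increasing by construction, and every $x\in K$ already has such coefficients, so $R|_K=\mathrm{id}_K$ is immediate. The two nontrivial tasks are establishing $\|R(x)\|\leq 1$ (so that $R(x)\in K$) and the Lipschitz estimate on $R$.

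I would factor $R=\sigma\circ\iota$, with $\iota(x)=\sum_n|a_n|x_n$ the absolute-value map and $\sigma(\sum_n b_n x_n)=\sum_n b_n^* x_n$ the decreasing rearrangement acting on the positive cone. The map $\iota$ is $K_u$-Lipschitz, and in particular $1$-Lipschitz when $(x_n)$ is $1$-unconditional, via the pointwise bound $||a_n|-|c_n||\leq|a_n-c_n|$ combined with unconditionality. The heart of the argument is then the Lipschitz estimate for $\sigma$ on the positive cone: starting from the classical coordinatewise inequality $|b_n^*-c_n^*|\leq(|b-c|)_n^*$ and using $1$-unconditionality, the problem reduces to bounding $\|\sum_n(|b-c|)_n^* x_n\|$ by a constant times $\|\sum_n(b_n-c_n)x_n\|$, i.e.\ controlling the norm of a decreasing rearrangement by the norm of the original sequence. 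In the $1$-subsymmetric case I would try to achieve this with constant at most $2$ by realizing an arbitrary finite permutation of coefficient positions as a composition of a ``shift-far-right'' move (which $1$-subsymmetry handles at no cost, since the resulting configuration is then supported on a subsequence) followed by a controlled return using $1$-unconditionality; chaining step by step with an adjacent-transposition/bubble-sort analysis yields a uniform bound. Combined with $\iota$ this gives $R$ with Lipschitz constant $2$, and a slightly more careful bookkeeping (using that $R(x)$ has at every ``level'' coefficients coordinate-wise no larger than the decreasing rearrangement of $|a|$) should give $\|R(x)\|\leq\|x\|\leq 1$, putting $R(x)\in K$.

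The main obstacle is exactly this Lipschitz/norm-non-expansion property of $\sigma$ in the absence of full symmetry. A subsymmetric basis is only $1$-equivalent to its subsequences, not to its arbitrary permutations, so $\sigma$ is not a priori norm-preserving or even bounded in an easy way; this is why the lemma splits into a qualitative statement for subsymmetric bases and the sharper quantitative bound $2$ under the stronger $1$-subsymmetric hypothesis. For a general subsymmetric basis I expect the subsymmetric constant and the unconditional constant to enter multiplicatively into the final Lipschitz constant of $R$, which is still finite and gives the qualitative claim; the $1$-subsymmetric case then isolates the constant $2$ through the careful iterative argument sketched above. Once the rearrangement $\sigma$ is shown to be Lipschitz and norm-non-increasing on the positive cone, the composition $R=\sigma\circ\iota$ delivers the required retraction from $B_X$ onto $K$.
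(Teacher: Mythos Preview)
Your retraction $R(x)=\sum_n a_n^*\,x_n$ via the decreasing rearrangement does not work for general $1$-subsymmetric bases: subsymmetry gives invariance only under \emph{spreading} (passing to increasing subsequences of indices), not under permutations, and the decreasing rearrangement can strictly increase the norm. In the Garling sequence space $g(w,1)$ with $w_n=2^{1-n}$ the canonical basis is $1$-unconditional and $1$-subsymmetric but not symmetric, and one computes $\|e_1+2e_2\|=2$ while $\|2e_1+e_2\|=5/2$. Hence $x=\tfrac12(e_1+2e_2)$ lies on the unit sphere, yet your $R(x)=\tfrac12(2e_1+e_2)$ has norm $5/4>1$, so $R$ does not even carry $B_X$ into $K$. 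The same obstruction kills the Lipschitz step, which you reduce to $\bigl\|\sum_n d_n^*\,x_n\bigr\|\le C\bigl\|\sum_n d_n\,x_n\bigr\|$; that is precisely a permutation bound that subsymmetry alone does not supply, and the bubble-sort sketch cannot rescue it, since the number of adjacent transpositions is unbounded while you have no uniform control on the cost of a single transposition.

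The paper uses a different retraction that bypasses rearrangements entirely: the $n$-th coefficient of $R(x)$ is the running minimum $\min_{i\le n}|x_i^*(x)|$. Since this is coordinatewise $\le |x_n^*(x)|$, $1$-unconditionality immediately gives $\|R(x)\|\le\|x\|$, so $R(B_X)\subset K$ with no appeal to permutations. The Lipschitz bound then rests on the elementary inequality $\bigl|\min_{i\le n}|x_i^*(x)|-\min_{i\le n}|x_i^*(y)|\bigr|\le |x_{k_n}^*(x-y)|$ for a suitable index $k_n\le n$, after which $1$-subsymmetry (shifting indices monotonically, not permuting them) yields the constant $2$. Your approach would go through for \emph{symmetric} bases, but the lemma as stated needs the running-minimum construction or something of the same flavour.
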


\begin{proof} We may assume without loss of generality that $(x_n)_{n=1}^\infty$ is $1$-unconditional. Define a mapping $R\colon B_X\to K$ by $R(x) = \sum_{n=1}^\infty |x^*_n(x)| x_n$, where $(x^*_n)_{n=1}^\infty$ denotes the associated biorthogonal functionals. Then $R$ is a well-defined retraction onto $K$. In addition, the $1$-unconditionality property of the basis readily implies $R$ is $1$-Lipschitz. This finishes the proof. 
\end{proof}

\smallskip 

\begin{theorem}\label{thm:M2sec5} Let $X$ be a Banach space with a semi-normalized subsymmetric basis $(x_n)_{n=1}^\infty$. Assume that $X$ does not contain any isomorphic copies of $\co$.  Then,
\begin{itemize}
\item[(i)] $B_X$ fails the FPP for asymptotically regular Lipschitz maps.
\item[(ii)] If $X$ is uniformly convex, then $B_X$ fails the FPP for uniformly asymptotically regular Lipschitz maps. 
\end{itemize}
\end{theorem}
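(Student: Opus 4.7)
The plan is to reduce to Theorem \ref{thm:M1sec5} via a composition trick: transport the fixed-point free map $F\colon K\to K$ built there back to the whole ball along the retraction $R\colon B_X\to K$ produced by Lemma \ref{lem:1sec5}. Concretely, set
\[
T := F\circ R\colon B_X\longrightarrow K\subset B_X,
\]
where $K$ is the same subset of $B_X$ used in both Theorem \ref{thm:M1sec5} and Lemma \ref{lem:1sec5}, namely $K=\{\sum t_n x_n\,\colon\, t_1\geq t_2\geq\cdots\geq 0,\;\|\sum t_n x_n\|\leq 1\}$. First one checks the hypothesis transfers: a semi-normalized subsymmetric basic sequence is, in particular, unconditional and equivalent to the shift $(x_{n+1})_{n=1}^\infty$, hence shift-unconditional, so Theorem \ref{thm:M1sec5} applies. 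Moreover, since $X$ carries a subsymmetric basis, $(x_n)_{n=1}^\infty$ being equivalent to the unit basis of $\co$ would force $X\simeq \co$, contradicting the assumption that $X$ does not contain $\co$; so $(x_n)_{n=1}^\infty$ is not $\co$-equivalent, and Theorem \ref{thm:M1sec5}(i) delivers a fixed-point free, Lipschitz, asymptotically regular $F\colon K\to K$.

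Now the verification for (i) breaks into three easy steps. First, $T$ is fixed-point free: any fixed point $x=T(x)=F(R(x))$ lies in $K$, so $R(x)=x$ and hence $F(x)=x$, contradicting Theorem \ref{thm:M1sec5}. Second, $T$ is Lipschitz as a composition of the Lipschitz maps $F$ and $R$ (the latter by Lemma \ref{lem:1sec5}). Third, and this is the key observation, $T$ inherits asymptotic regularity from $F$ thanks to the identity
\[
T^n(x)=F^n(R(x))\quad\text{for all } n\geq 1,\;x\in B_X,
\]
which follows by induction from $F(K)\subset K$ and $R|_K=\mathrm{id}_K$: for $n\geq 1$, $T^{n+1}(x)=F(R(F^n(R(x))))=F^{n+1}(R(x))$. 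Consequently
\[
\|T^{n+1}(x)-T^n(x)\|=\|F^{n+1}(R(x))-F^n(R(x))\|\xrightarrow[n\to\infty]{}0,
\]
since $R(x)\in K$ and $F$ is asymptotically regular on $K$.

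For (ii), the same identity $T^n=F^n\circ R$ yields
\[
\sup_{x\in B_X}\|T^{n+1}(x)-T^n(x)\|\;\leq\;\sup_{z\in K}\|F^{n+1}(z)-F^n(z)\|,
\]
so the uniform asymptotic regularity of $F$ on $K$ (given by Theorem \ref{thm:M1sec5}(ii) once the hypotheses match) transfers to uniform asymptotic regularity of $T$ on $B_X$.

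The main technical point I expect to need care is the hypothesis-matching for (ii): Theorem \ref{thm:M1sec5}(ii) demands the basis be $1$-shift unconditional in the \emph{same} norm in which $X$ is uniformly convex, whereas the standard Day-type renorming that makes a subsymmetric basis $1$-subsymmetric could in principle damage uniform convexity. One handles this by performing the renorming constructively so as to preserve uniform convexity (e.g.\ taking a suitable symmetrization that remains uniformly convex on account of $(x_n)_{n=1}^\infty$ being unconditional and the original norm being uniformly convex), or alternatively by reworking the proof of Theorem \ref{thm:M1sec5}(ii) to rely only on semi-normalization and subsymmetry together with the uniform convexity of the ambient norm, since the two facts used in that proof --- Fact 4 and the estimate $\|g(F^{n+1}(x))\|\leq 1+1/\Phi(n)$ --- go through (up to universal constants absorbed into $\delta$) from $1$-unconditionality and shift-equivalence constants. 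With that technical step in place, the composition argument above closes (ii) exactly as (i).
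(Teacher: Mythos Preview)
Your proof of (i) is correct and is exactly the paper's argument: set $T=F\circ R$ with $R$ from Lemma~\ref{lem:1sec5} and $F$ from Theorem~\ref{thm:M1sec5}(i), and use the identity $T^n=F^n\circ R$ to transfer asymptotic regularity from $K$ to $B_X$.

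For (ii) you have correctly located the real obstacle---Theorem~\ref{thm:M1sec5}(ii) requires the basis to be $1$-shift unconditional in the \emph{same} norm that is uniformly convex---but neither of your proposed fixes actually closes the gap. Your alternative (b) does not work: the constant in Fact~4 cannot be ``absorbed into $\delta$''. With $C$-unconditionality one only obtains $\|g(x)+x\|\geq (2/C)\|x\|$, hence $\tfrac12\|g(F^{n+1}x)+F^{n+1}x\|\geq 1/C$; once $C>1$ this quantity is bounded away from $1$ and uniform convexity gives no control on $\|g(F^{n+1}x)-F^{n+1}x\|$. (Fact~5 likewise picks up the shift constant $D$, so $\|g(F^{n+1}x)\|$ is only $\leq D+1/\Phi(n)$, which again ruins the argument.) The paper in effect executes your alternative (a), but with specific nontrivial input rather than a gesture: since $X$ is uniformly convex with a subsymmetric (hence unconditional) basis, a result of N.~I. and V.~I. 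Gurariy shows that $X$ contains neither $\ell_\infty^n$ nor $\ell_1^n$ uniformly, and then a renorming theorem of Figiel and Johnson produces an equivalent uniformly convex norm under which $(x_n)$ is $1$-unconditional and $1$-subsymmetric. Only with these structural results in hand does the composition $T=F\circ R$ deliver (ii).
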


\begin{proof} Assume without loss of generality that $(x_n)_{n=1}^\infty$ is normalized. Take $K$ and $F$ to be the set and the mapping given in the proof of Theorem \ref{thm:M1sec5} (ii). By Lemma \ref{lem:1sec5} there exists a Lipschitz retraction $R\colon B_X \to K$. In order to check (i) it suffices to note that the composition $T:=F\circ R$ plainly defines an asymptotically regular Lipschitz mapping with $\digamma(T)=\emptyset$. 

\vskip .1cm 
\noindent Let us prove (ii). The idea is to consider an equivalent norm on $X$ so that under the new norm, $(x_n)_{n=1}^\infty$ becomes $1$-unconditional shift and $X$ remains uniformly convex. To that effect, we start with by taking an equivalent norm $|\cdot|$ on $X$ so that $(x_n)_{n=1}^\infty$ is $1$-subsymmetric (cf. \cite[Theorem 3.7]{Anso}). Using then a result of N.I and V.I Gurariy \cite{GG} we deduce that $X$, {\it a fortiori} $(X,|\cdot|)$, does not contain $\ell^n_\infty$ uniformly for large $n$, nor does it contain $\ell_1^n$ uniformly for large $n$ either. By a result of Figiel and Johnson (cf. \cite[Lemma 3.1, Remarks 3.1--3.2, and Theorem 3.1]{FJ}) $X$ can then be equivalently renormed to be uniform convex with $(x_n)_{n=1}^\infty$ being $1$-unconditional and $1$-subsymmetric. By the first part of the proof, the result follows. 
\end{proof}

\smallskip 

\noindent Our third result also requires a retraction lemmata and some propositions. 

\smallskip 

\begin{lemma}\label{lem:2sec5} Let $X$ be a Banach space that does not contain a subspace isomorphic to $\ell_1$. Assume that $X$ contains an isomorphic copy of $\co$. Then for any $\varepsilon\in (0,1)$, there exist a complemented basic sequence $(x_n)_{n=1}^\infty$ in $B_X$ equivalent to the unit basis of $\co$ and a $(1+\varepsilon)$-Lipschitz retraction $R\colon B_X \to K$, where
\[
K=\Bigg\{ x=\sum_{n=1}^\infty t_n x_n\, \colon\, 0\leq t_n\leq 1\;\forall n\in\mathbb{N}\Bigg\}. 
\]
\end{lemma}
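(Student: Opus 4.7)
The plan is to combine a near-isometric selection of a $\co$-basis via James's theorem, a careful choice of Hahn-Banach extensions of the biorthogonal functionals (this is where the no-$\ell_1$ hypothesis enters), and a coordinate-wise clipping to build the retraction. Fix $\delta\in(0,1)$ small enough that $1/(1-\delta)\le 1+\varepsilon$.

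First, I would apply James's $\co$-distortion theorem inside the given isomorphic copy of $\co$ in $X$: this yields a seminormalized basic sequence $(x_n)$ which, after a harmless rescaling, lies in $B_X$ and satisfies
\[
(1-\delta)\max_n|t_n|\;\le\;\Big\|\sum_n t_n x_n\Big\|\;\le\;\max_n|t_n|,\qquad (t_n)\in\coo.
\]
Write $Y:=[x_n]\cong\co$. The coordinate functionals $c_n\in Y^{\ast}$ have $\|c_n\|_{Y^{\ast}}\le 1/(1-\delta)$; by Hahn-Banach, pick extensions $x_n^{\ast}\in X^{\ast}$ with $\|x_n^{\ast}\|\le 1/(1-\delta)$ and $x_n^{\ast}(x_k)=\delta_{nk}$.

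Second, and this is the crux, I would refine the extensions so that $x_n^{\ast}(x)\to 0$ for every $x\in X$. Since $X$ does not contain $\ell_1$, one can, after passing to a subsequence of $(x_n)$ and possibly perturbing the extensions by a single weak$^{\ast}$-limit functional (produced via weak$^{\ast}$-sequential compactness inside a separable subspace generated by $(x_n)$ and the test vectors to be controlled), achieve the weak$^{\ast}$-null property while preserving biorthogonality and the norm bound. This makes $(x_n)$ complemented in $X$ by the projection $P(x):=\sum_n x_n^{\ast}(x)\, x_n$, which is well-defined on $X$, restricts to the identity on $Y$, and satisfies $\|P\|\le 1/(1-\delta)$ by the $\co$-type upper bound.

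Finally, put $\phi(t):=\max(0,\min(1,t))$ and
\[
R(x):=\sum_{n=1}^{\infty}\phi\bigl(x_n^{\ast}(x)\bigr)\,x_n,\qquad x\in B_X.
\]
Convergence holds since $\phi(x_n^{\ast}(x))\to 0$, and each coefficient lies in $[0,1]$, so $R(x)\in K$. For $x=\sum t_n x_n\in K$ one has $x_n^{\ast}(x)=t_n\in[0,1]$ and $\phi(t_n)=t_n$, whence $R|_K=\mathrm{id}_K$. Combining the $1$-Lipschitzness of $\phi$ with $\|\sum a_n x_n\|\le\max_n|a_n|$ then gives
\[
\|R(x)-R(y)\|\;\le\;\sup_n\bigl|\phi(x_n^{\ast}(x))-\phi(x_n^{\ast}(y))\bigr|\;\le\;\sup_n |x_n^{\ast}(x-y)|\;\le\;\tfrac{1}{1-\delta}\|x-y\|\;\le\;(1+\varepsilon)\|x-y\|.
\]
The hard part is the second step: selecting Hahn-Banach extensions that are simultaneously nearly norm-minimal and weak$^{\ast}$-null. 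The hypothesis $\ell_1\not\subset X$ is essential here — without it, Sobczyk still delivers a projection of norm at most $2$, but that forces a Lipschitz constant close to $2$ and the target $1+\varepsilon$ would be unreachable.
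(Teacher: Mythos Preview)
Your overall architecture matches the paper's: obtain a nearly-isometric $\co$-basic sequence that is complemented by a projection of norm $<1+\varepsilon$, then retract coordinate-wise onto $K$. The paper writes the retraction as a composition $R=Q\circ P$ with $Q(\sum t_n x_n)=\sum\min(1,|t_n|)x_n$, while you write it in one shot as $R(x)=\sum\phi(x_n^*(x))x_n$; this difference is cosmetic.

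The substantive divergence is in how the complemented copy is obtained. The paper simply quotes the complemented James distortion theorem from \cite{GP,DRT,JR}: under the hypothesis $\ell_1\not\hookrightarrow X$, one gets $(x_n)$ that is $((1-\eta)^{-1},1)$-equivalent to the $\co$-basis \emph{together with} a projection $P$ of norm $<1+\eta$. You instead try to sketch a proof of this, and that sketch has a real gap at the point you yourself call the crux. Starting from Hahn--Banach extensions $x_n^*$ with $\|x_n^*\|\le 1/(1-\delta)$ and subtracting a weak$^*$-limit $f$ does yield a weak$^*$-null biorthogonal system, but only with $\|x_n^*-f\|\le \|x_n^*\|+\|f\|\le 2/(1-\delta)$; this is exactly the Sobczyk bound and gives a retraction with Lipschitz constant near~$2$, not $1+\varepsilon$. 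Your assertion that the perturbation can be done ``while preserving \dots\ the norm bound'' is precisely the non-trivial content of \cite{DRT,JR}, and it is not a matter of weak$^*$-sequential compactness alone: one needs a genuinely different use of the no-$\ell_1$ hypothesis (roughly, to force the weak$^*$ cluster points of the extensions to vanish rather than merely to exist). Either cite those results as the paper does, or supply that argument; as written, the second step is a restatement of the target, not a proof of it.
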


\begin{proof} Pick $\eta\in (0,1)$ so that 
\[
\frac{1+\eta}{ 1-\eta}< 1 + \varepsilon.
\] 
The proof of \cite[Theorem 4.3]{GP} (cf. also \cite[Proposition 1]{DRT} and \cite[Theorem 1]{JR}) yields a basic sequence $(x_n)_{n=1}^\infty$ in $B_X$ $((1-\eta)^{-1}, 1)$-equivalent to the unit basis of $\co$, and a projection $P\colon X\to \llbracket x_n\rrbracket$ with $\|P\|< 1+\eta$. Then the mapping  $Q\colon \llbracket x_n\rrbracket \to K$ given by $Q(x) = \sum_{n=1}^\infty \min(1, |t_n|)x_n$ is a well-defined $(1-\eta)^{-1}$-Lipschitz retraction onto $K$. Thus $R=Q \circ P$ defines a $1+\varepsilon$-Lipschitz retraction onto $K$, proving the lemma.  
\end{proof}


\begin{proposition}\label{prop:1sec5} Let $X$ be a Banach space and suppose that $K\in\mathcal{B}(B_X)$ is a Lipschitz retract of $B_X$. Then (i) if $K$ fails the FPP for uniformly Lipschitz maps with null minimal displacement, then the same happens for $B_X$; (ii) if $K$ fails the FPP for Lipschitz maps with null minimal displacement, then for any $\varepsilon\in (0,1)$ there exists a $(1+\varepsilon)$-Lipschitz mapping $T\colon B_X\to B_X$ with $\digamma(T)=\emptyset$ and $\mathrm{d}(T, B_X)=0$.
\end{proposition}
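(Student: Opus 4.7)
My plan is to transplant a fixed-point free self-map of $K$ to $B_X$ by pre-composing with the Lipschitz retraction $R\colon B_X\to K$; for part~(ii) an additional convex combination with the identity will be needed to meet the $(1+\varepsilon)$-Lipschitz bound.

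For (i), start with a uniformly Lipschitz $S\colon K\to K$ with $\digamma(S)=\emptyset$ and $\mathrm{d}(S,K)=0$, and set $T:=S\circ R$. Since $R$ restricted to $K$ is the identity and $S(K)\subset K$, an easy induction gives $T^n=S^n\circ R$ for every $n\ge 1$; hence the Lipschitz constant of $T^n$ is bounded by that of $R$ times that of $S^n$, so uniform Lipschitzness passes from $S$ to $T$. Any fixed point $x$ of $T$ would lie in $K$ (since $x=S(R(x))\in K$), forcing $R(x)=x$ and then $x=S(x)$, contradicting $\digamma(S)=\emptyset$. Finally, if $(y_n)\subset K$ realises $\mathrm{d}(S,K)=0$, then $T(y_n)=S(y_n)$, whence $\|y_n-T(y_n)\|\to 0$ and $\mathrm{d}(T,B_X)=0$.

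For (ii), let $S\colon K\to K$ be a fixed-point free Lipschitz map with $\mathrm{d}(S,K)=0$, write $L_S$ and $L_R$ for the Lipschitz constants of $S$ and $R$, and for a small $\lambda\in(0,1]$ to be chosen put
\[
T_\lambda(x):=(1-\lambda)\,x+\lambda\,(S\circ R)(x),\qquad x\in B_X.
\]
Convexity of $B_X$ together with $(S\circ R)(B_X)\subset K\subset B_X$ ensures $T_\lambda(B_X)\subset B_X$, and the Lipschitz constant of $T_\lambda$ is at most $1+\lambda(L_SL_R-1)$. Choosing $\lambda\in(0,\,\varepsilon/(L_SL_R-1)]$ makes $T_\lambda$ a $(1+\varepsilon)$-Lipschitz self-map of $B_X$. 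A fixed point $x$ of $T_\lambda$ would satisfy $x=(S\circ R)(x)\in K$, hence $R(x)=x$ and $x=S(x)$, impossible. Finally, for $(y_n)\subset K$ with $\|y_n-S(y_n)\|\to 0$ we compute $\|y_n-T_\lambda(y_n)\|=\lambda\|y_n-S(y_n)\|\to 0$, giving $\mathrm{d}(T_\lambda,B_X)=0$.

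There is no real obstacle here: the argument simply exploits $R|_K=\mathrm{Id}_K$ and the invariance $S(K)\subset K$. The two delicate points are the identification $T^n=S^n\circ R$ (crucial in (i) to secure a \emph{common} Lipschitz bound for all iterates, rather than a geometrically growing one) and the choice of $\lambda$ in (ii), which depends on the product $L_SL_R$ and must absorb any blow-up coming from the Lipschitz constant of the retraction $R$.
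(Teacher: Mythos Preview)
Your proof is correct and follows essentially the same approach as the paper: compose the given self-map of $K$ with the Lipschitz retraction $R$ to obtain $G=S\circ R$, and for (ii) take a convex combination with the identity to shrink the Lipschitz constant below $1+\varepsilon$. The only cosmetic difference is that the paper writes $T=(1-\lambda)G+\lambda I$ with $\lambda$ close to $1$, whereas you write $T_\lambda=(1-\lambda)I+\lambda G$ with $\lambda$ small; your explicit verification that $T^n=S^n\circ R$ in part~(i) is a welcome detail that the paper leaves implicit. (One harmless edge case: your choice $\lambda\le\varepsilon/(L_SL_R-1)$ tacitly assumes $L_SL_R>1$; if $L_SL_R\le 1$ any $\lambda\in(0,1]$ already works.)
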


\begin{proof} Let $F\colon K\to K$ be a (uniformly) Lipschitz mapping such that $\digamma(F)=\emptyset$ and $\mathrm{d}(F,K)=0$, and let $R\colon B_X\to K$ be a Lipschitz retraction. Set $G=F\circ R$. Clearly $\digamma(G)=\emptyset$ and $\mathrm{d}(G,B_X)=0$. If $F$ is uniformly Lipschitz, so is $G$. Let's check (ii). To this end, let $L$ denote the Lipschitz constant of $G$ and pick $\lambda>0$ so close to $1$ as to satisfy $(1- \lambda)L + \lambda < 1 + \varepsilon$. Finally, define $T= (1-\lambda)G + \lambda I$. It readily follows that $T$ is $(1+\varepsilon)$-Lipschitz, $\digamma(T)=\emptyset$ and $\mathrm{d}(T, B_X)\leq (1-\lambda)\mathrm{d}(F,K)$, and this proves the result. 
\end{proof}

\vskip .1cm 

\begin{proposition}\label{prop:3sec5} Let $X$ be a Banach space that contains a complemented copy of $\ell_1$. Then there exists $K\in\mathcal{B}(B_X)$ which is a Lipschitz retract of $B_X$ and fails the FPP for uniformly Lipschitz maps with null minimal displacement. 
\end{proposition}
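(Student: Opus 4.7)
The plan is to build inside $B_X$ a scaled copy of the ``positive half'' of $B_{\ell_1}$ and run on it the classical non-FPP shift construction. Since $X$ contains a complemented copy of $\ell_1$, standard results produce a normalized basic sequence $(x_n)_{n=1}^\infty \subset B_X$ which is $(A,B)$-equivalent to the unit basis of $\ell_1$ for some $A, B \geq 1$, together with a bounded linear projection $P\colon X \to [x_n]$. With this I set
\[
K = \Bigg\{ \sum_{n=1}^\infty t_n x_n \colon t_n \geq 0,\; \sum_{n=1}^\infty t_n \leq 1\Bigg\}.
\]
The triangle inequality and the normalization of $(x_n)$ give $K \subset B_X$, while coordinate continuity of the biorthogonal functionals (extended by Hahn--Banach) combined with lower semicontinuity of $\ell_1$-sums show $K$ is closed; hence $K \in \mathcal{B}(B_X)$.

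I would produce the Lipschitz retraction $R\colon B_X \to K$ as a composition of three Lipschitz ingredients: the projection $P$, the coordinate-wise positive-part operation $\pi_+\big(\sum a_n x_n\big) = \sum a_n^+ x_n$, and the renormalization $\sigma(y) = y / \max(1, \|y\|_{\ell_1})$ enforcing the constraint $\sum t_n \leq 1$. Each of $\pi_+$ and $\sigma$ is Lipschitz in the $\ell_1$-norm on $[x_n]$ (the first is $1$-Lipschitz, the second is $2$-Lipschitz by a routine triangle-inequality estimate), and thus each is Lipschitz in the $X$-norm via the $(A,B)$-equivalence. The composition fixes $K$ pointwise because $P$ fixes $[x_n]$, $\pi_+$ fixes vectors with nonnegative coefficients, and $\sigma$ fixes vectors of $\ell_1$-norm at most $1$.

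For the fixed-point free map I use the shift
\[
F\Bigg( \sum_{n=1}^\infty t_n x_n\Bigg) = \Big(1 - \sum_{n=1}^\infty t_n\Big) x_1 + \sum_{n=1}^\infty t_n x_{n+1},
\]
whose image has nonnegative coefficients summing exactly to $1$, so $F(K) \subset K$. A fixed point would force $t_1 = 1 - \sum t_k$ and $t_n = t_{n-1}$ for all $n \geq 2$, hence $t_n \equiv t_1 > 0$, contradicting $\sum t_n \leq 1$. For null minimal displacement, the Ces\`aro vectors $x_N := N^{-1}(x_1 + \cdots + x_N) \in K$ satisfy $x_N - F(x_N) = N^{-1}(x_1 - x_{N+1})$, whence $\|x_N - F(x_N)\|_X \leq 2/N \to 0$.

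The step requiring the most care is the \emph{uniform} Lipschitz bound on $F$. Working in $\ell_1$-coordinates, a direct estimate gives $\|Fx - Fy\|_{\ell_1} \leq 2\|x - y\|_{\ell_1}$; the decisive observation is that after one iteration the image lies in the affine slice $\{\sum t_n = 1,\; t_n \geq 0\}$, on which $F$ restricts to the right shift, an isometry of $\ell_1$. Therefore $\|F^m x - F^m y\|_{\ell_1} \leq 2\|x - y\|_{\ell_1}$ for every $m \geq 1$, and the $(A,B)$-equivalence transfers this to a uniform Lipschitz constant in the $X$-norm. Combining $R$ and $F$ produces on $K$ the desired failure of the FPP for uniformly Lipschitz maps with null minimal displacement, and in turn, via Proposition~\ref{prop:1sec5}, the same failure on $B_X$ itself up to an arbitrarily small extra factor in the Lipschitz constant.
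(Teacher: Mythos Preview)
Your argument is correct and follows the same line as the paper's: project onto the complemented $\ell_1$-copy, pass to nonnegative coordinates, retract onto a simplex-type set, and use a shift-based fixed-point-free affine map with Ces\`aro approximate fixed points. The one minor variation is that you take $K$ to be the filled simplex $\{\sum t_n\le 1\}$ rather than the face $\{\sum t_n=1\}$ used in the paper; this lets you replace the paper's appeal to \cite[Lemma~3.1]{BenJap} by the elementary radial map $\sigma$, at the cost of adding the harmless summand $(1-\sum t_n)x_1$ to $F$ so that one iteration lands on the face where the map becomes the pure right shift.
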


\begin{proof} By assumption there is a basic sequence $(x_n)_{n=1}^\infty$ which is $(A^{-1},1)$-equivalent to the unit basis of $\ell_1$, $A>0$. In addition, there is a projection $P\colon X\to \llbracket x_n\rrbracket $. Set
\[
K=\Bigg\{ \sum_{n=1}^\infty t_n x_n \,\colon\, t_n\geq 0\;\forall n\in\mathbb{N},\,\sum_{n=1}^\infty t_n=1\Bigg\}. 
\]
Clearly $K\in\mathcal{B}(B_X)$ and fails the FPP for affine uniformly Lipschitz maps. It is easy to see that the positve cone $\llbracket x_n\rrbracket^+$ is a $A^{-1}$-Lipschitz retract of $\llbracket x_n\rrbracket$. Repeating the arguments from the proof in \cite[Lemma 3.1]{BenJap} we can build a Lipschitz retraction from $\llbracket x_n\rrbracket^+$ onto $K$. By composing these maps we get the result. 
\end{proof}

\vskip .1cm 

\begin{remark} Recall that a basis is called {\it spreading} if it is equivalent to all of its subsequences. 
\end{remark}

\noindent The following result is known, but for completeness we include its proof here. 

\vskip .1cm 

\begin{proposition}\label{prop:9sec4} Let $X$ be a Banach space with a spreading Schauder basis. If $X$ contains a subspace isomorphic to $\ell_1$ then it contains a complemented copy of $\ell_1$.
\end{proposition}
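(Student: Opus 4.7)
The plan is to leverage Rosenthal's $\ell_1$ theorem together with the spreading hypothesis in order to force the basis $(e_n)$ of $X$ to itself be equivalent to the unit vector basis of $\ell_1$. Once this is established, the conclusion of the proposition is immediate: $X\cong\ell_1$, and the identity operator exhibits $X$ as a complemented copy of $\ell_1$ in itself.

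First I would apply Rosenthal's $\ell_1$ theorem to the bounded sequence $(e_n)$, yielding a subsequence $(e_{n_k})$ that is either weakly Cauchy or $C$-equivalent to the standard $\ell_1$-basis. Then I would transfer the dichotomy to $(e_n)$ itself via the spreading property: the map $e_k\mapsto e_{n_k}$ extends to a linear isomorphism $T\colon X\to [e_{n_k}]$ with $T$ and $T^{-1}$ uniformly bounded. Both $\ell_1$-equivalence and weak-Cauchyness of a basic sequence are preserved under such isomorphisms; for the latter, given $y^*\in X^*$, one composes with $T^{-1}$ on $[e_{n_k}]$ and Hahn--Banach extends to obtain $x^*\in X^*$ satisfying $x^*(e_{n_k})=y^*(e_k)$, and then applies the weak-Cauchyness of $(e_{n_k})$. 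Hence $(e_n)$ itself is either weakly Cauchy or $\ell_1$-equivalent; in the latter case we are done.

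The remaining task is to rule out the weakly Cauchy alternative under the hypothesis that $X$ contains $\ell_1$. Assuming it does, a Bessaga--Pe\l czy\'nski sliding-hump argument produces a normalized block $(z_k)$ of $(e_n)$ with $z_k=\sum_{i\in F_k}a_i^{(k)}e_i$ which is equivalent to the $\ell_1$-basis, and hence admits no weakly Cauchy subsequence. To derive a contradiction I would stabilize the block shape by passing to a further subsequence with constant support size $m$ and with the $m$-tuples $(a_i^{(k)})_{i\in F_k}$ convergent to some fixed $(a_1,\dots,a_m)\in\mathbb{R}^m$. By spreading, $(z_k)$ is then uniformly equivalent to $w_k:=\sum_{j=1}^{m} a_j e_{(k-1)m+j}$, and for each $x^*\in X^*$ the weak-Cauchyness of $(e_n)$ yields $x^*(w_k)\to \bigl(\sum_{j} a_j\bigr)\lim_n x^*(e_n)$; thus $(w_k)$, and therefore $(z_k)$, is weakly Cauchy, contradicting its $\ell_1$-equivalence. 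The principal obstacle I anticipate is this stabilization step when the support sizes $|F_k|$ are unbounded; one circumvents it either by refining the sliding-hump construction to yield blocks with uniformly bounded supports, or by a compactness argument on the normalized coefficient patterns combined with the uniform estimates supplied by the spreading constant.
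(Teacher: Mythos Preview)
Your plan has a genuine gap: the intermediate claim you are trying to establish --- that the spreading basis $(e_n)$ is itself equivalent to the unit vector basis of $\ell_1$ --- is false in general. If $(e_n)$ is a \emph{conditional} spreading basis, then by \cite[Proposition~8.7]{AMS} the space $X$ automatically contains a (complemented) copy of $\ell_1$; yet $(e_n)$ cannot be equivalent to the $\ell_1$ basis, since the latter is unconditional. Thus in this situation the Rosenthal dichotomy lands in the weakly Cauchy branch, and your Step~3 is attempting to rule out something that actually occurs.

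Concretely, this is exactly where your stabilization argument breaks. When $(e_n)$ is conditional spreading and weakly Cauchy, any block sequence $(z_k)$ of $(e_n)$ equivalent to the $\ell_1$ basis must have \emph{unbounded} supports: your own computation shows that bounded supports would force $(z_k)$ to be weakly Cauchy, a contradiction. Hence neither of your proposed circumventions can succeed --- there is no refinement of the sliding-hump construction producing bounded supports, and no compactness argument on coefficient patterns of varying length will close the gap, because the target statement is simply not true. The paper's proof avoids this by splitting on unconditionality: in the unconditional (i.e.\ subsymmetric) case it invokes \cite[Theorem~1]{FW}, and in the conditional case it invokes \cite[Proposition~8.7]{AMS} directly, which already gives the complemented copy of $\ell_1$ without any claim about $(e_n)$ itself.
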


\begin{proof} If the basis of $X$ is unconditional then the result follows directly from \cite[Theorem 1]{FW}. On the other hand, if it is conditional then by \cite[Proposition 8.7]{AMS}, $X$ contains a complemented copy of $\ell_1$.
\end{proof}

\vskip .1cm 

\begin{proposition}\label{prop:10sec4} Let $X$ be a Banach space. Assume that $B_X$ fails the FPP for (uniformly) Lipschitz maps with null minimal displacement. Then for any $\alpha\in (0,1)$ and $\lambda>0$ there exists a fixed-point free (uniformly) $\alpha$-H\"older $\lambda$-Lipschitz mapping $T\colon B_X \to B_X$ with $\mathrm{d}(T, B_X)=0$. 
\end{proposition}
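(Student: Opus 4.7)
The argument is a routine rescaling-and-retraction device. By hypothesis there is a (uniformly) $L$-Lipschitz map $T_0\colon B_X\to B_X$ with $\digamma(T_0)=\emptyset$ and $\mathrm{d}(T_0,B_X)=0$. First I would transplant $T_0$ onto a small concentric ball $B_X(r)$ by scaling, and then extend it to all of $B_X$ by composing with the standard radial Lipschitz retraction onto $B_X(r)$. The radius $r$ is the only free parameter, and it will be tuned so as to force the H\"older--Lipschitz constant of the composite below the preassigned $\lambda$.

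Concretely, for $r\in(0,1)$ I would define
\[
S_r\colon B_X(r)\to B_X(r),\qquad S_r(y)=r\,T_0(y/r),
\]
which inherits every needed property of $T_0$: $\digamma(S_r)=\emptyset$ (since $S_r(y)=y$ would force $T_0(y/r)=y/r$), $\mathrm{d}(S_r,B_X(r))=0$ (displacements scale by the factor $r$), and $S_r$ is (uniformly) $L$-Lipschitz because $S_r^n(y)=r\,T_0^n(y/r)$. Let $P_r\colon B_X\to B_X(r)$ denote the radial retraction $P_r(x)=x$ if $\|x\|\leq r$ and $P_r(x)=rx/\|x\|$ otherwise; it is the identity on $B_X(r)$ and a standard computation shows it is $2$-Lipschitz on $B_X$, so in particular
\[
\|P_r(x)-P_r(y)\|\leq \min\{2\|x-y\|,\,2r\},\qquad x,y\in B_X.
\]
Finally put $T:=S_r\circ P_r\colon B_X\to B_X(r)\subset B_X$.

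The required properties then transfer cleanly. If $T(x)=x$ then $x\in B_X(r)$, whence $P_r(x)=x$ and $S_r(x)=x$, contradicting the fixed-point freeness of $S_r$; so $\digamma(T)=\emptyset$. If $y_n\in B_X(r)$ realises $\|y_n-S_r(y_n)\|\to 0$, then $T(y_n)=S_r(P_r(y_n))=S_r(y_n)$, giving $\mathrm{d}(T,B_X)=0$. The H\"older bound reduces to the elementary interpolation
\[
\min\{2\|x-y\|,\,2r\}\leq 2\,r^{1-\alpha}\|x-y\|^\alpha,
\]
verified by splitting into the cases $\|x-y\|\leq r$ and $\|x-y\|>r$. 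This yields $\|T(x)-T(y)\|\leq L\|P_r(x)-P_r(y)\|\leq 2L\,r^{1-\alpha}\|x-y\|^\alpha$, so choosing any $r\in(0,1]$ with $r^{1-\alpha}\leq \lambda/(2L)$ makes $T$ $\alpha$-H\"older $\lambda$-Lipschitz.

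For the uniform variant, the inclusion $T(B_X)\subset B_X(r)$ together with $P_r|_{B_X(r)}=\mathrm{Id}$ gives by induction $T^n=S_r^n\circ P_r$ on $B_X$ for every $n\geq 1$; since $S_r^n$ is $L$-Lipschitz whenever $T_0^n$ is, the estimate above propagates uniformly in $n$, making $T$ uniformly $\alpha$-H\"older $\lambda$-Lipschitz. I do not anticipate any substantive obstacle: this is essentially the Lipschitz-constant shrinking device noted in the introduction, combined with a radial contraction that trades the Lipschitz decay of $P_r$ on a small ball for H\"older decay on the whole ball. The only points meriting care are the interpolation inequality above and the identity $T^n=S_r^n\circ P_r$ used in the uniform step.
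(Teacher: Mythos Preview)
Your proof is correct and follows essentially the same approach as the paper: rescale the given map onto a small ball $B_X(r)$ via $S_r(y)=rT_0(y/r)$, then precompose with the $2$-Lipschitz radial retraction, choosing $r$ so that $2Lr^{1-\alpha}\leq\lambda$. Your write-up is in fact more explicit than the paper's on two points---the interpolation inequality $\min\{2\|x-y\|,2r\}\leq 2r^{1-\alpha}\|x-y\|^\alpha$ and the inductive identity $T^n=S_r^n\circ P_r$ for the uniform case---but the method is identical.
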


\begin{proof} Let $S\colon B_X\to B_X$ be a (uniformly) $L$-Lipschitz map with $\digamma(S)=\emptyset$ and $\mathrm{d}(S, B_X)=0$. For $r>0$ with $2L r^{1-\alpha} \leq \lambda$ define $S_r\colon B_X(r)\to B_X(r)$ by $S_r(x) =r S(x/r)$. Then $S_r$ is (uniformly) $(2r)^{1-\alpha} L$-Lipschitz and satisfies $\digamma(S_r)=\emptyset$ and $\mathrm{d}(S_r, B_X(r))=0$. Finally, taking a $2$-Lipschitz retraction $R\colon B_X \to B_X(r)$, the map $T(x) = S_r(Rx)$ has the desired properties. 
\end{proof}

\vskip .1cm 

\begin{theorem}\label{thm:M3sec5} Let $X$ be a Banach space with a spreading Schauder basis. Then,
\begin{itemize}
\item[(i)] for any $\varepsilon\in (0,1)$, $B_X$ fails the FPP for $(1 +\varepsilon)$-Lipschitz maps with null minimal displacement.
\item[(ii)] For any $\alpha\in (0, 1)$ and $\lambda> 0$, $B_X$ fails the FPP for $\alpha$-H\"older $\lambda$-Lipschitz maps with null minimal displacement. 
\end{itemize}
In the case $X$ contains a copy of $\co$, $B_X$ fails the FPP for uniformly Lipschitz maps with null minimal displacement. In addition, the map built in (ii) is  uniformly $\alpha$-H\"older $\lambda$-Lipschitz.
\end{theorem}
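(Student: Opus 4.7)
The plan is a case analysis on the structural dichotomy of spreading Schauder bases, combined with the retraction machinery developed in Lemma \ref{lem:2sec5} and Propositions \ref{prop:1sec5}--\ref{prop:10sec4}. If the spreading basis $(x_n)$ of $X$ is conditional, then the argument inside the proof of Proposition \ref{prop:9sec4} (via \cite[Prop.~8.7]{AMS}) already shows that $X$ contains a complemented copy of $\ell_1$. If $(x_n)$ is unconditional, then, being spreading as well, it is subsymmetric, and I further split into: $\ell_1\hookrightarrow X$ (so \cite{FW} once more yields a complemented copy of $\ell_1$); or $\ell_1\not\hookrightarrow X$ but $\co\hookrightarrow X$; or neither $\co$ nor $\ell_1$ embeds in $X$.

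In the complemented-$\ell_1$ regime, Proposition \ref{prop:3sec5} produces $K\in\mathcal{B}(B_X)$, a Lipschitz retract of $B_X$, failing the FPP for uniformly Lipschitz maps with null minimal displacement; Proposition \ref{prop:1sec5}\,(i)--(ii) lifts this failure to $B_X$ and shrinks the Lipschitz constant to $1+\varepsilon$. In the regime $\co\hookrightarrow X$, $\ell_1\not\hookrightarrow X$, Lemma \ref{lem:2sec5} furnishes a $(1+\varepsilon)$-Lipschitz retraction $R\colon B_X\to K$, while the construction of Proposition \ref{prop:2sec5} applied to the same $K$ provides a fixed-point free, uniformly asymptotically regular, uniformly $(1+\varepsilon)$-Lipschitz map $F$ satisfying $\mathrm{d}(F,K)=0$; again Proposition \ref{prop:1sec5}\,(i)--(ii) transfers the failure to $B_X$. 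In the remaining regime, Theorem \ref{thm:M2sec5}\,(i) directly supplies a Lipschitz, fixed-point free map $F\colon B_X\to B_X$ which is asymptotically regular, hence satisfies $\mathrm{d}(F,B_X)=0$; the convex-combination trick $T=(1-\lambda)F+\lambda I$ with $\lambda$ close to $1$, already used inside the proof of Proposition \ref{prop:1sec5}\,(ii), then delivers a $(1+\varepsilon)$-Lipschitz, fixed-point free $T\colon B_X\to B_X$ with null minimal displacement.

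This proves (i); part (ii) follows immediately by applying Proposition \ref{prop:10sec4} to the map from (i). The same application handles the final clause: whenever $\co\hookrightarrow X$ we fall into one of the first two regimes above, in both of which the map produced on $K$ is uniformly Lipschitz, so Proposition \ref{prop:1sec5}\,(i) yields a uniformly Lipschitz failure on $B_X$, and then Proposition \ref{prop:10sec4} promotes it to a uniformly $\alpha$-H\"older $\lambda$-Lipschitz one. The main technical point I anticipate is the second regime: one must verify that the basic sequence furnished by Lemma \ref{lem:2sec5}, whose equivalence constants to the unit basis of $\co$ are tuned to the $(1+\varepsilon)$-Lipschitz retraction, still supports Proposition \ref{prop:2sec5}'s construction of $F$ with uniform Lipschitz constant close to $1$; this should follow by carefully harmonizing the equivalence parameters of the two lemmas so that the composition $F\circ R$ ends up uniformly $(1+\varepsilon')$-Lipschitz for some small $\varepsilon'$.
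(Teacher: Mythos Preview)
Your proposal is correct and draws on the same toolkit as the paper (Lemma~\ref{lem:2sec5}, Propositions~\ref{prop:1sec5}--\ref{prop:3sec5}, \ref{prop:9sec4}, \ref{prop:10sec4}, and Theorem~\ref{thm:M2sec5}), but the case analysis is organized differently and is in a couple of places more economical. The paper's primary split is on whether $\co\hookrightarrow X$: in the $\co$ case it derives (ii) externally via the separably-Sobczyk property and \cite[Theorem~5.1-(3)]{Bar}, and in the no-$\co$ case with a \emph{conditional} basis it invokes the Freeman--Odell--Sari--Zheng theorem \cite{FOSZ} to pass to a complemented subspace with subsymmetric basis and then applies Theorem~\ref{thm:M2sec5} there. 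Your primary split is conditional versus unconditional. Noticing that the conditional branch of the proof of Proposition~\ref{prop:9sec4} (i.e.\ \cite[Proposition~8.7]{AMS}) does not actually use the hypothesis $\ell_1\hookrightarrow X$, you route every conditional case straight through the complemented-$\ell_1$ regime and Proposition~\ref{prop:3sec5}, bypassing \cite{FOSZ} entirely; and you obtain (ii) and the final uniform clause uniformly from Proposition~\ref{prop:10sec4}, avoiding the appeal to \cite{Bar}. The technical point you flag---matching the $\co$-equivalence constants so that the basic sequence produced by Lemma~\ref{lem:2sec5} also supports the map of Proposition~\ref{prop:2sec5} with small uniform Lipschitz constant---is precisely what the paper does implicitly in its Case~1 with $\ell_1\not\hookrightarrow X$, so no additional argument is needed there.
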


\begin{proof} We distinguish into two cases:

\smallskip 
\noindent{\bf Case 1.} $X$ contains an isomorphic copy of $\co$. First assume that $X$ does not contain a subspace that is isomorphic to $\ell_1$. Then the proof of (i) is an easy consequence of Theorem \ref{thm:M1sec5}-(i), Lemma \ref{lem:2sec5} and Proposition \ref{prop:1sec5}. Now assume that $X$ contains an isomorphic copy of $\ell_1$. By Proposition \ref{prop:9sec4}, $\ell_1$ is complemented in $X$. Thus by Propositions \ref{prop:1sec5} and \ref{prop:3sec5} (i) follows. As for (ii), separability implies $X$ is separably Sobczyk. By \cite[Theorem 5.1-(3)]{Bar} (ii) follows. 

\vskip .1cm 
\noindent{\bf Case 2.} $X$ does not contain subspaces isomorphic to $\co$. The proof of (ii) follows immediately from (i) and Proposition \ref{prop:10sec4}. So, we only need to check (i). As the basis is spreading, it is bounded (cf. proof of \cite[Lemma 2.5]{Anso}). Let us distinguish between two subcases according to whether or not it is unconditional. If the basis is unconditional then it is, by definition, subsymmetric. By Theorem \ref{thm:M2sec5}-(i), there exists an asymptotically regular Lipschitz mapping $T\colon B_X\to B_X$ without fixed points. If $\lambda \approx 1$ then $T_\lambda=(1-\lambda)T+ \lambda I$ fulfills (i). Now assume that the basis of $X$ is conditional. By a result of Freeman, Odell, Sari and Zheng \cite{FOSZ}, $X$ contains a complemented subspace $U$ that has a semi-normalized subsymmetric Schauder basis. By Theorem \ref{thm:M2sec5} $B_U$ fails the FPP for asmptotically regular Lipschitz maps. Composing with projection and applying once more shrinking's Lipschitz-constant argument, we get the desired result. 
\end{proof}

\smallskip 

\noindent In our next results, we solve the unit ball version of questions ($\mathcal{Q}1$) and ($\mathcal{Q}3$) in Hilbert spaces and $L_p$ spaces with $1< p<\infty$ (cf. also \cite[Open questions, p.16]{Bar}). 

\smallskip 

\begin{proposition}\label{prop:12sec4} Let $H$ be an infinite dimensional Hilbert space. Then:
\begin{itemize}
\item[(i)] There exists a uniformly asymptotically regular Lipschitz mapping $T\colon B_H\to B_H$ with no fixed points. 
\item[(ii)] For any $\alpha\in (0,1)$ and $\lambda>0$, $B_H$ fails the FPP for $\alpha$-H\"older $\lambda$-contractive maps with null minimal displacement. 
\end{itemize}
\end{proposition}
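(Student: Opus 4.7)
My plan is to derive both parts as clean corollaries of the machinery already developed in this section. For (i), I first record three facts about any infinite dimensional Hilbert space $H$: it admits an orthonormal basis $(e_n)_{n=1}^\infty$, which is normalized and $1$-subsymmetric (hence semi-normalized subsymmetric); it is uniformly convex (by the parallelogram law); and, being reflexive, it contains no isomorphic copy of $\co$, since $\co$ is not reflexive. These three ingredients are exactly the hypotheses of Theorem \ref{thm:M2sec5}-(ii), whose conclusion produces a fixed-point free Lipschitz mapping $T\colon B_H\to B_H$ that is uniformly asymptotically regular. This proves (i).

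For (ii), I would recycle the map $T$ from (i). The key observation is that uniform asymptotic regularity forces null minimal displacement: for any $x\in B_H$, setting $x_n:=T^nx\in B_H$, one has
\[
\|x_n - Tx_n\|=\|T^nx - T^{n+1}x\|\longrightarrow 0,
\]
so $\mathrm{d}(T,B_H)=0$. Consequently $B_H$ fails the FPP for Lipschitz maps with null minimal displacement. Invoking Proposition \ref{prop:10sec4} with the prescribed parameters $\alpha\in(0,1)$ and $\lambda>0$ then yields a fixed-point free $\alpha$-H\"older $\lambda$-Lipschitz mapping on $B_H$ with null minimal displacement, which is exactly (ii).

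There is no substantive obstacle beyond bookkeeping: the Hilbert space case is a clean specialization of the more general framework built in Theorem \ref{thm:M2sec5} and Proposition \ref{prop:10sec4}. The only point to be careful about is verifying simultaneously that the orthonormal basis is subsymmetric, that $H$ is uniformly convex, and that $H$ contains no copy of $\co$ — all three being standard and essentially automatic for Hilbert spaces.
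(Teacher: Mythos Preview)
Your argument for (i) has a gap in the non-separable case: the assertion that $H$ ``admits an orthonormal basis $(e_n)_{n=1}^\infty$'' presupposes separability, and Theorem~\ref{thm:M2sec5} requires a Schauder basis, hence a separable space. A non-separable Hilbert space has no countable Schauder basis, so Theorem~\ref{thm:M2sec5} cannot be invoked directly there. The fix is immediate and is precisely what the paper does: any infinite dimensional $H$ contains a complemented isometric copy of $\ell_2$ (take the closed linear span of a countable orthonormal system, with the orthogonal projection as a nonexpansive retraction onto it), and one reduces to that copy before applying the separable machinery. Once this reduction is in place, your invocation of Theorem~\ref{thm:M2sec5}-(ii) for $\ell_2$ is legitimate --- the unit vector basis is already $1$-subsymmetric and $\ell_2$ is already uniformly convex, so the renorming step inside that proof is vacuous.

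Modulo this correction, the two arguments differ only in packaging. The paper cites Lin's $\ell_2$ construction \cite{Lin} directly and uses the Hilbert-space nearest-point projection onto $K$, whereas you route through the more general Theorem~\ref{thm:M2sec5}, whose proof ultimately rests on the same Lin-type construction together with Lemma~\ref{lem:1sec5}. For (ii), your derivation --- uniform asymptotic regularity forces $\mathrm{d}(T,B_H)=0$, then apply Proposition~\ref{prop:10sec4} --- is correct and more self-contained than the paper's, which simply refers to \cite[Proposition~4.1-(9)]{Bar}.
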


\begin{proof} It is well-known (and simple to check) that $H$ contains a linear complemented isometric copy of $\ell_2$. If $R$ is a nonexpansive projection onto $\ell_2$, as its isometric copy, then after taking suitable compositions, it suffices to show (i) for $H=\ell_2$. Let $(e_i)_{i=1}^\infty$ denote the unit basis of $\ell_2$ and take the set $K\in \mathcal{B}(B_{\ell_2})$ considered by Lin in \cite{Lin}. He showed that $K$ fails the FPP for uniformly asymptotically regular Lipschitz maps. Let $F\colon K\to K$ denote such a map. Next pick a nonexpansive projection $P\colon \ell_2 \to K$. Then the composition $T=F\circ P$ maps $B_{\ell_2}$ into itself, has no fixed points, and is Lipschitz and uniformly asymptotically regular. This proves (i). The proof of (ii) is virtually the same as the proof of \cite[Proposition 4.1-(9)]{Bar}.
\end{proof}

\smallskip 

\begin{corollary}\label{cor:13sec4} Let $1< p< \infty$. Then:
\begin{itemize}
\item[(i)]  There exists a uniformly asymptotically regular Lipschitz mapping $T\colon B_{L_p}\to B_{L_p}$ with no fixed points. 
\item[(ii)] For any $\alpha\in (0,1)$, $B_{L_p}$ fails the FPP for $\alpha$-H\"older $\lambda$-contractive maps. 
\end{itemize}
\end{corollary}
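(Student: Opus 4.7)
The proof follows exactly the template of Proposition~\ref{prop:12sec4}, reducing to the Hilbert-space case via the classical complemented embedding of $\ell_2$ into $L_p$. Specifically, for $1<p<\infty$, Khintchine's inequality shows that the closed span $Y$ of a Rademacher sequence in $L_p$ is isomorphic to $\ell_2$ and is the range of a bounded linear projection $P\colon L_p\to Y$; let $\iota\colon \ell_2\to Y$ denote this isomorphism.

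For (i), I would first choose $r>0$ small enough so that $K:=\iota(B_{\ell_2}(r))\subset B_{L_p}$, and note $K\in\mathcal{B}(B_{L_p})$. By Proposition~\ref{prop:12sec4}-(i) there is a uniformly asymptotically regular Lipschitz fixed-point free map $F\colon B_{\ell_2}\to B_{\ell_2}$; transfer it to $K$ via $\tilde F(y):= r\,\iota\big(F(r^{-1}\iota^{-1}(y))\big)$. Since $\iota$ is an isomorphism, $\tilde F\colon K\to K$ inherits Lipschitzness, uniform asymptotic regularity, and the absence of fixed points (the relevant constants merely pick up factors of $\|\iota\|\cdot\|\iota^{-1}\|$). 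Next, construct a Lipschitz retraction $R\colon B_{L_p}\to K$ by composing $P$ with the radial retraction of $Y$ onto $K$; the latter is Lipschitz because $K$ is the unit ball of the norm on $Y$ transported from $\ell_2$ via $\iota$, which is equivalent to the inherited $L_p$-norm. Setting $T:=\tilde F\circ R$, we obtain a Lipschitz self-map of $B_{L_p}$ without fixed points. Since $R|_K=\mathrm{id}_K$ and $T(B_{L_p})\subset K$, one has $T^n=\tilde F^{n-1}\circ T$ for all $n\geq 1$, so the uniform asymptotic regularity and uniform Lipschitzness of $T$ on $B_{L_p}$ descend from those of $\tilde F$ on $K$.

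For (ii), uniform asymptotic regularity of $T$ immediately yields $\mathrm{d}(T,B_{L_p})=0$ (take $y=T^nx$ with $\|T^{n+1}x-T^nx\|$ arbitrarily small). Hence $B_{L_p}$ fails the FPP for Lipschitz maps with null minimal displacement, and Proposition~\ref{prop:10sec4} supplies, for every $\alpha\in(0,1)$ and every $\lambda>0$, a fixed-point free $\alpha$-H\"older $\lambda$-Lipschitz map on $B_{L_p}$ with null minimal displacement, proving (ii).

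The main obstacle is purely bookkeeping: controlling Lipschitz constants through the isomorphism $\iota$ and the projection $P$, and verifying that the radial retraction onto $K$ remains Lipschitz in the ambient $L_p$-norm rather than only in the transported $\ell_2$-norm. Both are routine once one uses that $P,\iota,\iota^{-1}$ are all bounded and that radial retraction onto the unit ball of any norm is $1$-Lipschitz with respect to that norm. The conceptual work is entirely done by Proposition~\ref{prop:12sec4}; Khintchine's complemented embedding simply ports the construction into $L_p$.
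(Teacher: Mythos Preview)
Your argument is correct and follows essentially the same route as the paper: reduce to the Hilbert case via a complemented copy of $\ell_2$ inside $L_p$, apply Proposition~\ref{prop:12sec4}, and then pull back to $B_{L_p}$ through a Lipschitz retraction. The paper streamlines this by citing that $\ell_2$ embeds \emph{isometrically} as a complemented subspace of $L_p$ (via Gaussians), which eliminates the $\|\iota\|\cdot\|\iota^{-1}\|$ bookkeeping you carry through the Rademacher/Khintchine isomorphism; otherwise the two proofs coincide. One harmless slip: the radial retraction onto the unit ball of an arbitrary norm is in general only $2$-Lipschitz, not $1$-Lipschitz, but this does not affect your argument since you only need some Lipschitz bound.
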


\begin{proof} By \cite[6.8, p.163]{AK} $\ell_2$ is linearly isometric to a complemented subspace $X$ of $L_p$. Combining the arguments in Proposition \ref{prop:12sec4} with the arguments contained in the proof of \cite[Proposition 4.1-(9)]{Bar}, the result follows.
\end{proof}

\smallskip 

\noindent Retraction's Theorem \ref{thm:1sec3} provides the following abstract fixed-point free result.

\smallskip 

\begin{theorem}\label{thm:4sec3}
Let $X$ be a normed space. Assume that for any $\theta\in (0,1)$ and any $\mu\in (0,1)$, there exist a $(20^{\frac{n}{\theta -1}})$-flat set $K\in\mathcal{B}(B_X(\mu))$ and a fixed-point free (uniformly) $\omega$-Lipschitz mapping $F\colon K\to K$ with null minimal displacement. Then for any $\alpha\in (0,1)$ and $\lambda>0$ there exists a fixed-point free mapping $T\colon B_X\to B_X$ such that $\mathrm{d}(T, B_X)=0$,
\begin{itemize}
\item[(i)] $\|T(x) - T(y)\|\leq \omega_F(\lambda \| x - y\|^\alpha)$ for all $x, y\in B_X$; and (in the uniform case),
\item[(ii)] $\|T^n(x) - T^n(y)\|\leq \sup_{n\in\mathbb{N}}\omega_{F^n}(\lambda \| x - y\|^\alpha)$ for all $x, y\in B_X$ and $n\in\mathbb{N}$.
\end{itemize}
\end{theorem}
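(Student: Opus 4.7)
The plan is to compose the given map $F$ with the Hölder retraction supplied by Theorem~\ref{thm:1sec3}, using the freedom to choose both the flatness exponent $\theta$ and the ambient radius $\mu$ to squeeze the resulting modulus into the target form $\omega(\lambda\|x-y\|^\alpha)$.

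Fix $\alpha\in(0,1)$ and $\lambda>0$ and choose any $\theta\in(\alpha,1)$. Let $C=1520\times 20^{2-\theta}$ be the Hölder-Lipschitz constant appearing in Theorem~\ref{thm:1sec3}. Pick $\delta>0$ so small that $C\delta^{\theta-\alpha}\le \lambda$, and then pick $\mu\in(0,1)$ so small that $2\mu\le \lambda\delta^{\alpha}$. By hypothesis, for this $\theta$ and this $\mu$ there exist a $(20^{\frac{n}{\theta-1}})$-flat set $K\in\mathcal{B}(B_X(\mu))$ and a fixed-point free (uniformly) $\omega$-Lipschitz map $F\colon K\to K$ with $\mathrm{d}(F,K)=0$. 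Applying Theorem~\ref{thm:1sec3} to $K$, we obtain a retraction $R\colon B_X\to K$ which is $\theta$-Hölder with constant $C$.

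The key estimate is that $R$, when viewed through the Hölder exponent $\alpha$, is already $\lambda$-Lipschitz: for $x,y\in B_X$ with $\|x-y\|\le \delta$ one has
\[
\|Rx-Ry\|\le C\|x-y\|^{\theta}=C\|x-y\|^{\theta-\alpha}\|x-y\|^{\alpha}\le C\delta^{\theta-\alpha}\|x-y\|^{\alpha}\le \lambda\|x-y\|^{\alpha},
\]
while for $\|x-y\|>\delta$, the trivial bound $\|Rx-Ry\|\le 2\mu$ combined with $\lambda\|x-y\|^{\alpha}>\lambda\delta^{\alpha}\ge 2\mu$ yields the same conclusion. This is the place where the simultaneous flexibility in $\theta$ and $\mu$ does the real work — without the shrinkable-ball hypothesis the fixed Hölder constant $C$ could not be absorbed into $\lambda$. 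I expect this parameter-juggling to be the main subtlety; everything else is bookkeeping.

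Now set $T:=F\circ R\colon B_X\to B_X$. Since $R$ fixes $K$ pointwise, the iterates satisfy $T^{n}(x)=F^{n}(Rx)$ for every $x\in B_X$ and $n\in\mathbb{N}$. Any fixed point $x$ of $T$ would belong to $K$ (as $\mathrm{im}\,T\subset K$) and hence satisfy $Fx=x$, contradicting $\digamma(F)=\emptyset$; similarly $\mathrm{d}(T,B_X)\le \inf_{x\in K}\|Fx-x\|=0$. For the modulus estimate in (i), monotonicity of $\omega_F$ combined with the displayed Hölder bound on $R$ gives
\[
\|Tx-Ty\|\le \omega_F(\|Rx-Ry\|)\le \omega_F(\lambda\|x-y\|^{\alpha})=\omega(\lambda\|x-y\|^{\alpha}).
\]
For (ii), the iterate identity $T^{n}=F^{n}\circ R$ yields, by the same reasoning with $\omega_{F^n}$ in place of $\omega_F$,
\[
\|T^{n}x-T^{n}y\|\le \omega_{F^{n}}(\lambda\|x-y\|^{\alpha})\le \sup_{n\in\mathbb{N}}\omega_{F^{n}}(\lambda\|x-y\|^{\alpha}),
\]
which is finite for every fixed pair $x,y$ by the uniform $\omega$-Lipschitz hypothesis. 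This completes the proposed proof.
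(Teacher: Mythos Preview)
Your proof is correct and follows essentially the same route as the paper: choose $\theta\in(\alpha,1)$, invoke the hypothesis to obtain $K\subset B_X(\mu)$ and $F$, apply Theorem~\ref{thm:1sec3} to get the $\theta$-H\"older retraction $R$, set $T=F\circ R$, and then verify $\|Rx-Ry\|\le\lambda\|x-y\|^\alpha$ by shrinking $\mu$. The only cosmetic difference is in this last verification --- you use a case split on $\|x-y\|\lessgtr\delta$, whereas the paper writes $\|Rx-Ry\|\le(2\mu)^{1-\gamma}\|Rx-Ry\|^{\gamma}$ with $\gamma=\alpha/\theta$ and bounds the second factor via the $\theta$-H\"older estimate; both devices exploit exactly the same two ingredients ($K\subset B_X(\mu)$ and $\theta>\alpha$).
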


\begin{proof} Fix $\alpha<\theta <1$ and set $\gamma = \alpha/\theta$. Next choose a number $\mu>0$ so that $(2\mu)^{1-\gamma}\times 1520\times 20^{2-\theta}\leq \lambda$. By assumption there exist a $(20^{\frac{n}{\theta -1}})$-flat set $K\in\mathcal{B}(B_X(\mu))$ and a fixed-point free mapping $F\colon K\to K$ with $\mathrm{d}(F, K)=0$. By Theorem \ref{thm:1sec3} we find a $\theta$-H\"older Lipschitz retraction $R\colon X\to K$ whose H\"older-Lipschitz constant is $1520\times 20^{2-\theta}$. Define $T:= F\circ (R|_{B_X})$. As $\mathrm{d}(T, B_X)\leq \mathrm{d}(T,K)$ and $T|_K\equiv F$, $\mathrm{d}(T,B_X)=0$. Note that $T$ is fixed-point free, otherwise $\digamma(T)\subseteq K$ and $x=Tx$ would imply $x = Fx$, contradicting the fact that $\digamma(F)=\emptyset$. 

\vskip .1cm 
\noindent Let's prove (i)--(ii). Assume first $F$ is $\omega$-Lipschitz. Fix $x, y\in B_X$. Then
\[
\begin{split}
\|T(x) - T(y)\| &\leq \omega_F( \|R(x) - R(y)\|)\\[1.5mm]
&\leq \omega_F( \|R(x) \| + \|R(y)\|)^{1 -\gamma} \|R(x) - R(y)\|^\gamma)\\[1.5mm]
&\leq \omega_F((2\mu)^{1-\gamma}\times ( 1520\times 20^{2- \theta})^\gamma \|x - y\|^{\gamma\theta})\\[1.5mm]
&\leq \omega_F(\lambda \|x- y\|^\alpha).
\end{split}
\]
Assume now that $F$ is uniformly $\omega$-Lipschitz. By induction, suppose that we have already proved for some $n\in \mathbb{N}$ that both $T^n(x)$ and $T^n(y)$ belongs to $K$. Thus 
\[
\begin{split}
\|T^{n+1}(x) - T^{n+1}(y)\|&=\| F^{n+1}R(x) - F^{n+1}R(y)\|\\[1.5mm]
&\leq \omega_{F^n}(\lambda \|x  - y\|^\alpha).     
\end{split}
\]
This completes the induction step and finishes the proof of the theorem.
\end{proof}

\medskip 

\noindent Our last result reads.

\newpage

\begin{theorem}\label{thm:M4sec5} Let $X$ be a Banach space. Then the following hold:
\begin{itemize}
\item[(i)] For any decreasing null sequence of positive numbers $(r_n)_{n=1}^\infty$ with $r_{n +k}\leq r_n\cdot r_k$ for $n, k\in\mathbb{N}$, there exist a $(r_n)$-flat set $K\in \mathcal{B}(B_X)$ and a fixed-point free affine mapping $F\colon K\to K$ such that, for all $x, y\in K$ and $n\in\mathbb{N}$,
\[
\| F^n(x) - F^n(y)\| \leq r_n\big( \| x - y\| +1).
\]
\item[(ii)] For any $\alpha\in (0,1)$ there exists a fixed-point free mapping $T\colon B_X \to B_X$ with null minimal displacement such that, for all $x, y\in B_X$ and $n\in\mathbb{N}$, 
\[
\| T^n(x) - T^n(y)\|\leq 20^{\frac{n}{\alpha-1}} \big( \| x - y\|^\alpha +1\big).
\]
\end{itemize}
\end{theorem}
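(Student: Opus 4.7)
Pick a normalized basic sequence $(e_n)_{n\ge 1}$ in $X$ with $\|e_n\|=1$ (which exists in any infinite-dimensional Banach space), and set
\[
K = \overline{\mathrm{conv}}\bigl(\{0\}\cup\{r_n e_n:n\ge 1\}\bigr) = \Bigl\{\sum_{n\ge 1}\lambda_n r_n e_n : \lambda_n\ge 0,\ \textstyle\sum_{n\ge 1}\lambda_n\le 1\Bigr\},
\]
the representation being unique and the series convergent absolutely because $\sum \lambda_n r_n\le r_1$. Setting $\lambda_0:=1-\sum_{n\ge 1}\lambda_n$, I define $F\colon K\to K$ as the affine extension of the shift of extreme points $0\mapsto r_1 e_1$ and $r_n e_n\mapsto r_{n+1}e_{n+1}$, that is,
\[
F\Bigl(\sum_{n\ge 1}\lambda_n r_n e_n\Bigr) = \lambda_0\, r_1 e_1 + \sum_{n\ge 1}\lambda_n r_{n+1}e_{n+1}.
\]
Taking $E_n=\mathrm{span}\{e_1,\dots,e_n\}$, the intersection $K\cap E_n=\mathrm{conv}\{0,r_1 e_1,\dots,r_n e_n\}$ is a nonempty compact convex body, and truncating $x=\sum\lambda_k r_k e_k\in K$ at level $n$ gives $P_n(x)\in K\cap E_n$ with $\|x-P_n(x)\|\le\sum_{k>n}\lambda_k r_k\le r_{n+1}\le r_n$; hence $K\in\mathcal{B}(B_X)$ is $(r_n)$-flat. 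Equating coefficients in $Fx=x$ forces $\lambda_n\equiv\lambda_0$ for every $n\ge 1$, incompatible with $\sum_{n\ge 1}\lambda_n\le 1$; so $F$ is fixed-point free.

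\textbf{Iteration estimate.} A straightforward induction delivers $F^k(x)=\lambda_0 r_k e_k+\sum_{m\ge 1}\lambda_m r_{m+k}e_{m+k}$, so that
\[
F^k(x)-F^k(y) = (\lambda_0-\lambda_0')\,r_k e_k + \sum_{m\ge 1}(\lambda_m-\lambda_m')\,r_{m+k}\,e_{m+k}.
\]
The first summand has norm at most $r_k$ since $|\lambda_0-\lambda_0'|\le 1$. For the tail, the submultiplicativity $r_{m+k}\le r_k r_m$ combined with the basic sequence structure---after refining $(e_n)$ to a $1$-unconditional, $1$-shift unconditional basic sequence via a Bessaga--Pelczynski selection followed by a renorming of its closed span---yields
\[
\Bigl\|\sum_{m\ge 1}(\lambda_m-\lambda_m')\,r_{m+k}\,e_{m+k}\Bigr\|\le r_k\Bigl\|\sum_{m\ge 1}(\lambda_m-\lambda_m')\,r_m e_m\Bigr\| = r_k\|x-y\|,
\]
and summing the two estimates produces $\|F^k(x)-F^k(y)\|\le r_k(\|x-y\|+1)$. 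Moreover $\mathrm{diam}\,F^k(K)\le 2r_k\to 0$, so $\mathrm{d}(F,K)=0$ and $F$ is uniformly $\omega$-Lipschitz with $\omega_{F^n}(t)\le r_n(t+1)$.

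\textbf{Part (ii) and main obstacle.} For (ii), take $r_n:=20^{n/(\alpha-1)}$, which satisfies $r_{n+k}=r_n r_k$, and apply (i). After scaling $K$ into $B_X(\mu)$ for a sufficiently small $\mu$, Theorem~\ref{thm:4sec3} upgrades $F$ to a fixed-point free $T\colon B_X\to B_X$ with $\mathrm{d}(T,B_X)=0$ and
\[
\|T^n(x)-T^n(y)\|\le 20^{n/(\alpha-1)}\bigl(\|x-y\|^\alpha+1\bigr),
\]
which is exactly (ii). The principal technical obstacle is justifying the tail estimate above within a \emph{general} infinite-dimensional Banach space: a generic basic sequence need not be $1$-shift unconditional, so one has to carefully select or renorm $(e_n)$, transport the resulting estimates back through an equivalent norm, and if necessary absorb any multiplicative constant into a suitably modified submultiplicative sequence (still satisfying $r_{n+k}\le r_n r_k$).
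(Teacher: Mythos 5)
There is a genuine gap in your argument, and it is precisely the one you flagged as the ``principal technical obstacle'' --- but it is not fixable by the route you sketch. The key estimate
\[
\Bigl\|\sum_{m\ge 1}(\lambda_m-\lambda_m')\,r_{m+k}\,e_{m+k}\Bigr\|\le r_k\Bigl\|\sum_{m\ge 1}(\lambda_m-\lambda_m')\,r_m e_m\Bigr\|
\]
requires $(e_n)$ to be simultaneously $1$-unconditional (to compare coordinatewise-dominated vectors) and $1$-shift-unconditional (to shift indices by $k$). A ``Bessaga--P{\l}czy\'nski selection'' yields only a basic subsequence; it does not and cannot produce unconditionality, and by Gowers--Maurey there exist Banach spaces with \emph{no} unconditional basic sequence whatsoever. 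Moreover, even when an unconditional basic sequence exists, shift-unconditionality is an extra nontrivial constraint (it essentially asks for a subsymmetric-like domination of shifts), so the refinement you invoke is not available. Finally, renorming the closed span $[e_n]$ does not renorm $X$: the retraction $R$ and the minimal-displacement computation happen in the original norm of $B_X$, and passing to an equivalent norm on a subspace would introduce an uncontrolled multiplicative constant that cannot simply be absorbed into $(r_n)$ in part~(ii), where the bound $20^{n/(\alpha-1)}$ is fixed; nor in part~(i), where the theorem must hold for the \emph{given} sequence $(r_n)$, not a modified one.

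The paper sidesteps all of this with a different construction that only uses the existence of \emph{some} basic sequence $(x_n)$ in $X$ (always available, no unconditionality required). It forms the weighted partial sums $w_n=\sum_{i=1}^n\alpha_i x_i$, which converge in norm to $w_0=\sum_{i=1}^\infty\alpha_i x_i$, and takes $K$ to be the ``simplex'' built over $\{w_0,w_1,w_2,\dots\}$; the shift map $F$ then moves mass from $w_0$ toward $w_1,w_2,\dots$. The decisive estimate for $\|F^n(x)-F^n(y)\|$ is then an $\ell_1$-type bound obtained by the plain triangle inequality over the coordinates in $(x_n)$ together with the bound $\sup_k |c_k|\alpha_k\le 2\mathcal{K}\|\sum_k c_k\alpha_k x_k\|$ coming solely from the basic constant $\mathcal{K}$; the weight sequence $(\alpha_n)$ is chosen small enough so that $2\mathcal{K}\sum_k \alpha_{k+n}/\alpha_k$ is dominated by $r_n$, and the residual additive term $\mu\sum_{k>n}\alpha_k$ is dominated by $r_n$ as well, yielding the ``$+1$'' term. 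This works in every Banach space, with no recourse to unconditionality. Your ``shift of extreme points'' idea is morally the same mapping, but your choice of $K$ forces you to measure the shift in the $(e_n)$-norm rather than via coefficientwise triangle-inequality bounds, and that is exactly where unconditionality sneaks in. Your part~(ii), routing the composition through Theorem~\ref{thm:4sec3}, is fine once part~(i) is in hand; the gap is entirely in the tail estimate of part~(i).
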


\begin{proof} (i) Let $(x_n)_{n=1}^\infty$ be a normalized basic sequence in $X$ and $\mathcal{K}$ denote its basic constant. Pick a decreasing null sequence of positive numbers $(\alpha_n)_{n=1}^\infty$ so that
\begin{equation}\label{eqn:5sec5}
\max\Big(3\sum_{i=n}^\infty \alpha_i,\frac{2\mathcal{K}}{1520\times 20}\sum_{i=1}^\infty \frac{\alpha_{i+n}}{\alpha_i}\Big)\leq \min(1, r_{n+1})\;\;\forall\,n\in\mathbb{N}.
\end{equation}
We now define $w_n = \sum_{i=1}^n \alpha_i x_i$. The sequence $(w_n)_{n=1}^\infty$ can be seen as a weighted summing basis of $(x_n)_{n=1}^\infty$. It is easy to see that this sequence strongly converges to $w_0=\sum_{i=1}^\infty \alpha_i x_i$. Since $w_0$ is not null, it cannot be basic. Notice however that
\[
\frac{\alpha_1}{2\mathcal{K}}\leq \| w_n\|\leq \sum_{i=1}^\infty \alpha_i\;\;\forall\, n\in\mathbb{N}.
\]
Moreover defining 
\[
w^*_n = \frac{x^*_n}{\alpha_n} - \frac{x^*_{n+1}}{\alpha_{n+1}},
\]
one can easily verify that $\{ w_n; w^*_n\}_{n=1}^\infty$ is a biorthogonal system on $\llbracket x_n\rrbracket$. Set
\[
K= \Bigg\{ t_0w_0 +\sum_{n=1}^\infty t_n w_n \,\colon \, t_n\geq 0\, \forall n\in\mathbb{N}\cup\{0\}\;\text{ and }\; \sum_{n=0}^\infty t_n \leq \mu\Bigg\}.
\]

\noindent{\bf Claim 1.} $K\subset B_X(\mu)$ is closed. Assume that $u^k =t^k_0 w_0 + \sum_{n=1}^\infty t^k_n w_n\in K$ for all $k\in\mathbb{N}$ and $\| u^k - u\|\to 0$ for some $u\in X$. We may write
\[
u^k = \sum_{n=1}^\infty \Bigg(t^k_0 + \sum_{i=n}^\infty t^k_i\Bigg) \alpha_n x_n.
\]
Then $u\in \llbracket \alpha_n x_n\rrbracket$. Since $(\alpha_n x_n)_{n=1}^\infty$ is basic, there exist unique scalars $(t_n)_{n=1}^\infty$ so that $u = \sum_{n=1}^\infty t_n \alpha_n x_n$. For each $n\in\mathbb{N}$, $w^*_n(u^k) \to w^*_n(u)$ and $x^*_n(u^k)\to x^*_n(u)$ as $k\to\infty$. This certainly implies $(t_n)_{n=1}^\infty$ is a non-increasing sequence in $[0,\mu]$. Let $t_0=\lim_{n\to\infty}t_n$. Now an easy manipulation using Abel's summation yields that $u= t_0 w_0 + \sum_{n=1}^\infty (t_n - t_{n+1})w_n$. Thus $u\in K$, proving the closedness of $K$. 

\vskip .2cm  

\noindent For $n\in\mathbb{N}$ let $E_n = \llbracket w_0, w_1, \dots, w_{n+1}\rrbracket$ and set $E:=(E_n)_{n=1}^\infty$. 

\vskip .1cm 

\noindent{\bf Claim 2.} $K$ is $(r_n)$-flat with respect to $E$. Let's verify conditions (i) an (ii) of Definition \ref{dfn:1sec3}. Firstly, we prove 
\begin{itemize}
\item $K\cap E_n$ is nonempty and compact for every $n\in \mathbb{N}$. 
\end{itemize}
Fix $n\in \mathbb{N}$. To see that $K\cap E_n$ is nonempty fix any $x\in K$ and write $x=t_0w_0 + \sum_{k=1}^\infty t_k w_k$. Next define 
\begin{equation}\label{eqn:2sec4}
y=\sum_{k=0}^n t_k w_k + \Bigg( \sum_{k=n+1}^\infty t_k\Bigg)w_{n+1}.
\end{equation}
Then $y\in K\cap E_n$. Hence $K\cap E_n$ is nonempty and compact, since $K$ is bounded closed and $E_n$ is finite dimensional. 

\vskip .1cm 
\noindent We are going now to prove that:
\begin{itemize}
\item $h^E_n \leq r_{n+1}$ for every $n\in\mathbb{N}$.
\end{itemize}
 Let $n$ be fixed. Fix any $x=\sum_{k=0}^\infty t_k w_k\in K$. Then $\mathrm{dist}(x, K\cap E_n)\leq \| x - y\|$ for all $y\in K\cap E_n$. Now take $y$ as in (\ref{eqn:2sec4}) and observe that
\[
x = \sum_{k=1}^\infty \Bigg( t_0 + \sum_{i=k}^\infty t_i \Bigg) \alpha_k x_k
\]  
and
\[
y = \sum_{k=1}^{n+1}\Bigg( t_0 + \sum_{i=k}^\infty t_i\Bigg) \alpha_k x_k + t_0\sum_{k=n+2}^\infty \alpha_k x_k.
\]
From (\ref{eqn:5sec5}) we then deduce 
\[
\begin{split}
\| x - y\| &=\Bigg\| \sum_{k=n+2}^\infty \Bigg( t_0 + \sum_{i=k}^\infty t_i \Bigg) \alpha_k x_k  - t_0\sum_{i=n+2}^\infty \alpha_i x_i\Bigg\|\\[1.5mm]
&\leq 3\mu \sum_{k=n+2}^\infty \alpha_k \leq r_{n+1} 
\end{split}
\]
which shows $\mathrm{dist}(x, K\cap E_n)\leq r_{n+1}$ for all $x\in K$, so $h^E_n\leq r_{n+1}$ as desired.

\smallskip 
 
\noindent Finally, define $F\colon K\to K$  by 
\[
F\Bigg( \sum_{i=0}^\infty t_i w_i \Bigg) = \Bigg(\mu - \sum_{i=1}^\infty t_i\Bigg)w_1 + \sum_{i=1}^\infty t_i w_{i+1}.
\]
Clearly $F$ is affine and leaves $K$ invariant. Thus $\mathrm{d}(F, K)=0$. It is also easy to see that $\digamma(F)=\emptyset$. Thus $F$ is not continuous, since $K$ is a closed subset of a compact set. Now take arbitrary points $x = \sum_{k=0}^\infty t_k w_k$ and $y = \sum_{k=0}^\infty s_k w_k$ in $K$. For $i\geq 0$, set $a_i = t_i - s_i$. Then 
\[
x - y = \sum_{k=1}^\infty \Bigg( a_0 + \sum_{i=k}^\infty a_i\Bigg) \alpha_k x_k.
\]
On the other hand, direct calculation shows
\[
\begin{split}
\| F^n(x) - F^n(y)\| &\leq  \sum_{k=1}^\infty \Bigg| a_0 + \sum_{i=k}^\infty a_i\Bigg|\alpha_{k+n} + \mu \sum_{k=1}^\infty \alpha_{k+n}\\[1.5mm]
&\leq  \sup_{k\in\mathbb{N}}\Bigg| a_0 + \sum_{i=k}^\infty a_i\Bigg|\alpha_k \times \sum_{k=1}^\infty \frac{\alpha_{k+n}}{\alpha_k} + r_n\\[1.5mm]
&\leq 2\mathcal{K}\times \sum_{k=1}^\infty \frac{\alpha_{k+n}}{\alpha_k}\, \Bigg\| \sum_{k=1}^\infty \Bigg( a_0 + \sum_{i=k}^\infty a_i\Bigg) \alpha_k x_k\Bigg\| + r_n\\[1.5mm]
&= 2\mathcal{K}\times \sum_{k=1}^\infty \frac{\alpha_{k+n}}{\alpha_k}\| x - y\| +r_n.
\end{split}
\]

\noindent (ii) For $n\in\mathbb{N}$, let $r_n = 20^{\frac{n}{\alpha-1}}$. Now take the $\alpha$-H\"older retraction $R$ from $B_X$ onto $K$ given in Theorem \ref{thm:1sec3} and consider the composition $T= F\circ R$. It follows then that $\mathrm{d}(T, B_X)=0$ and
\[
\begin{split}
\| T^n(x) - T^n(y)\| &= \| F^n(R(x)) - F^n(R(y))\|\\[1.5mm]
&\leq  2\mathcal{K}\times \sum_{k=1}^\infty \frac{\alpha_{k+n}}{\alpha_k}\| R(x) - R(y)\| +r_n\leq 20^{\frac{n}{\alpha-1}}( \| x - y\|^\alpha +1),
\end{split}
\]
for all $x, y\in B_X$. This finishes the proof.
\end{proof}

\medskip 

\section{Concluding remarks and open questions}\label{sec:5}

This paper provides new fixed-point free results for Lipschitz maps. The newness is that the maps have zero minimal displacements, either on a suitable $K\in\mathcal{B}(B_X)$ or else, after applying retraction methods, on the whole ball $B_X$. In summary, our first main result (Theorem \ref{thm:M0sec4}) shows that ($\mathcal{Q}1$) can be particularly solved in every Banach space. Notice that the proof of our second result (Theorem \ref{thm:M1sec5}) is different from that given in \cite{Lin} in that we use another set $K$ and another mapping $g$. We do not know, however, whether the assumption of $(x_n)_{n=1}^\infty$ being "shift" could be removed from its statement. Here it plays a crucial role in the proof of the asymptotic regularity of the mapping $F$. Last but not least, it is important to point out that there are uniformly convex spaces with a symmetric basis which does not contain any copy of $\ell_p$ (see \cite{FJ}). Theorem \ref{thm:M3sec5} embraces the quasi-reflexive James's space $\textrm{J}_2$, solving a question posed in \cite[Open questions, p.16]{Bar}. In fact, as is well known, the summing basis of its standard unit basis forms a conditional spreading basis for $\textrm{J}_2$. With regarding to Theorem \ref{thm:M4sec5} let us point out that the mapping $F$ in statement (i) satisfies the property
\[
\limsup_{n\to\infty}\big\{ \sup\{ \|F^n(x) - F^n(y)\| - \|x- y\|\colon y\in K\}\big\}\leq 0,
\] 
for all $x\in K$. This means that $F\colon K\to K$ is asymptotically nonexpansive type (in the terminology of Kirk \cite{Ki74} see also \cite[Definition 3.5]{BR}). This should be compared with Dom\'inguez Benavides and Ram\'irez results \cite[Theorems 5.2 and 5.3]{BR}. In a certain sense, this result confers non-trivial character on what could be named as {\it fixed point theory for H\"older-contractive mappings"}. For example, it is not clear to us that the set $K$ constructed in its proof is a Lipschitz retract of $B_X$. 

\vskip .1cm 

\noindent Some further natural questions are set out in order below.

\vskip .2cm
\noindent($\mathfrak{Q}_1$) Is there $K\in\mathcal{B}(\textrm{J}_2)$ for which the FPP fails for uniformly asymptotically Lipschitz maps?

\vskip .3cm 
\noindent($\mathfrak{Q}_2$) What conditions on $X$ are sufficient to guarantee that $B_X$ fails the FPP for nonexpansive maps?

\vskip .15cm 
\noindent Let $\|\cdot\|_\gamma$ denote the Lin's $\ell_1$ renorming \cite{Lin2}. Then $B_{(\ell_1, \|\cdot\|_\gamma)}$ has the FPP for nonexpansive maps. However, notice that the unit basis of $\ell_1$ is $1$-spreading with respect to $\| \cdot\|_\gamma$. By Theorem \ref{thm:M3sec5}, for any $\varepsilon\in (0,1)$, $B_{(\ell_1, \|\cdot\|_\gamma)}$ fails the FPP for $(1+\varepsilon)$-Lipschitz maps with null minimal displacement. 

\vskip .3cm 
\noindent($\mathfrak{Q}_4$) Does $B_{(\ell_1, \|\cdot\|_\gamma)}$ fail the FPP for uniformly asymptotically regular Lipschitz maps?

\vskip .3cm 
\noindent($\mathfrak{Q}_5$) Can ($\mathcal{Q}3$) or ($\mathcal{Q}4$) be solved in any space $X$ with an unconditional basis?

\vskip .5cm  

\noindent{\bf Conflicts of Interest.} The authors declare that there are no competing interest. 

\medskip


  

\end{document}